
\documentclass[twoside,leqno]{amsart}

\usepackage{amsmath,color,amsthm}

\makeatletter
\@namedef{subjclassname@2010}{%
  \textup{2010} Mathematics Subject Classification}
\makeatother

\numberwithin{equation}{section}

\newtheorem{theorem}{Theorem}[section]
\newtheorem{corollary}[theorem]{Corollary}
\newtheorem{lemma}[theorem]{Lemma}
\newtheorem{proposition}[theorem]{Proposition}

\newtheorem{example}[theorem]{\sl Example}
\newtheorem{definition}[theorem]{\sl Definition}

\theoremstyle{definition}
\newtheorem{Remark}[theorem]{Remark}

\newcommand{\beqn}{\begin{eqnarray}}
\newcommand{\eeqn}{\end{eqnarray}}
\newcommand{\beqnn}{\begin{eqnarray*}}
\newcommand{\eeqnn}{\end{eqnarray*}}

\newcommand{\SV}{S_n^{\mbox{\scriptsize V}}}
\newcommand{\SQ}{S_n^{\mbox{\scriptsize Q}}}
\newcommand{\SQnk}{S_{n,k}^{\mbox{\scriptsize Q}}}
\newcommand{\LQk}{L_k(n)}
\newcommand{\LQkm}{L_{k-1}(n)}
\newcommand{\pivrank}{\mbox{pivrank}_k(n)}

\newcommand{\RQk}{R_k(n)}
\newcommand{\RQkm}{R_{k-1}(n)}

\newcommand{\rank}{m_n}
\newcommand{\pSeedk}{U_{\tau_k(n)}}

\newcommand{\nukm}{\nu_{k-1}(n)}

\def\cal{\mathcal}

\newcommand{\lf}{\left\lfloor}

\newcommand{\rf}{\right\rfloor}

\newcommand{\EE}{{\bf  E}}

\newcommand{\PP}{{\bf  P}}

\newcommand{\Dt}{{\tilde{D}}}
\newcommand{\Ht}{{\widetilde{H}}}

\newcommand{\St}{{\widetilde{S}}}

\newcommand{\Lc}{{\mathcal L}}

\newcommand{\Leq}{{\,\stackrel{\Lc}{=}\,}}

\newcommand{\asto}{{\stackrel{\mbox{\rm \scriptsize a.s.}}{\longrightarrow}}}
\newcommand{\Lpto}{{\stackrel{L^p}{\longrightarrow}}}

\newcommand{\Hh}{\widehat{H}}

\newcommand{\hti}{\tilde{h}}
\newcommand{\begp}{\begin{proposition}}
\newcommand{\enp}{\end{proposition}}
\newcommand{\begt}{\begin{theorem}}
\newcommand{\ent}{\end{theorem}}
\newcommand{\begl}{\begin{lemma}}
\newcommand{\enl}{\end{lemma}}
\newcommand{\begc}{\begin{corollary}}
\newcommand{\enc}{\end{corollary}}
\newcommand{\begcl}{\begin{claim}}
\newcommand{\encl}{\end{claim}}
\newcommand{\begr}{\begin{Remark}\rm}
\newcommand{\enr}{\end{Remark}}
\newcommand{\begal}{\begin{algorithm}}
\newcommand{\enal}{\end{algorithm}}
\newcommand{\begd}{\begin{definition}}
\newcommand{\enf}{\end{definition}}
\newcommand{\begx}{\begin{example}}
\newcommand{\enx}{\end{example}}
\newcommand{\bega}{\begin{array}}
\newcommand{\ena}{\end{array}}

\newcommand{\sfrac}[2]{{\textstyle\frac{#1}{#2}}}

\def\rompar(#1){\textup(#1\textup)}    

\newcommand{\refS}[1]{Section~\ref{#1}}
\newcommand{\refT}[1]{Theorem~\ref{#1}}

\newcommand{\refL}[1]{Lemma~\ref{#1}}
\newcommand{\refP}[1]{Proposition~\ref{#1}}
\newcommand{\refR}[1]{Remark~\ref{#1}}

\newcommand\eg{e.g.\spacefactor=1000}

\newcommand\cf{{cf.}\spacefactor=1000}

\newcommand\noqed{\renewcommand{\qed}{}} 

\newcommand\QuickSort{\texttt{QuickSort}}

\hyphenation{Quick-sort}

\newcommand\urladdrx[1]{{\urladdr{\def~{{\tiny$\sim$}}#1}}}

\begin{document}

\newcommand{\tab}[0]{\hspace{.1in}}

\title[Convergence in Distribution for QuickSelect]
{Distributional Convergence for the Number of Symbol Comparisons Used by QuickSelect}

\author{James Allen Fill}
\address{Department of Applied Mathematics and Statistics,
The Johns Hopkins University,
3400 N.~Charles Street,
Baltimore, MD 21218-2682 USA}
\email{jimfill@jhu.edu}
\urladdrx{http://www.ams.jhu.edu/~fill/}
\thanks{Research for both authors supported by the Acheson~J.~Duncan Fund for the Advancement of Research in Statistics.}
\author{Takehiko Nakama}
\address{European Centre for Soft Computing, 
Edificio de Investigaci\'{o}n,
Calle Gonzalo Guti\'{e}rrez Quir\'{o}s S/N,
33600 Mieres, Asturias, Spain}
\email{takehiko.nakama@softcomputing.es}
\urladdrx{http://www.softcomputing.es/}
\subjclass[2010]{Primary 60F25; Secondary 68W40}
\keywords{{\tt QuickSelect}, {\tt QuickQuant}, {\tt QuickVal}, limit distribution, $L^p$-convergence, almost sure convergence, symbol comparisons, probabilistic source}

\date{February~10, 2012.  Revised September~20, 2012.}

\maketitle

\begin{center}
{\sc Abstract}
\vspace{.3cm}
\end{center}
When the search algorithm {\tt QuickSelect} compares keys during its execution in order to find a key of target rank, it must operate on the keys' representations or internal structures, which were ignored by the previous studies that quantified the execution cost for the algorithm in terms of the number of required key comparisons.  In this paper, we analyze running costs for the algorithm that take into account not only the number of key comparisons but also the cost of each key comparison.  We suppose that keys are represented as sequences of symbols generated by various probabilistic sources and that {\tt QuickSelect} operates on individual symbols in order to find the target key. We identify limiting distributions for the costs and derive integral and series expressions for the expectations of the limiting distributions.  These expressions are used to recapture previously obtained results on the number of key comparisons required by the algorithm.

\section{Introduction and Summary}\label{S:introduction}
{\tt QuickSelect}, introduced by Hoare~\cite{h1961} in~1961 and also known as {\tt Find} or ``Hoare's selection algorithm'', is a simple search algorithm widely used for finding a key (an object drawn from a linearly ordered set) of target rank in a file of keys.
We briefly review the operation of the algorithm.  Suppose that there are~$n$ keys (we will suppose that these are all distinct) and that the target rank is~$m$, where $1 \leq m \leq n$.
{\tt QuickSelect}\ $\equiv$\  {\tt QuickSelect}$(n, m)$ chooses a uniformly random key, called the pivot, and compares each other key to it.  This determines the rank~$j$ (say) of the pivot.  If $j = m$, then the algorithm returns the pivot key and terminates.  If $j > m$, then {\tt QuickSelect} is applied recursively to find the key of rank~$m$ in the set of $j - 1$ keys found to be smaller than the pivot.  If $j < m$, then {\tt QuickSelect} is applied recursively to find the key of rank $m - j$ in the set of $n - j$ keys larger than the pivot.

Many studies have examined this algorithm to quantify its execution costs (a non-exhaustive list of references is 
Knuth~\cite{k1972};
Mahmoud, Modarres, and Smythe~\cite{mms1995};
Prodinger~\cite{p1995}; Gr\"{u}bel and R\"{o}sler~\cite{gr1996}; Lent and Mahmoud~\cite{lm1996}; 
Gr\"{u}bel~\cite{g1998}; Mahmoud and Smythe~\cite{ms1998}; Devroye~\cite{d2001}; Hwang and Tsai~\cite{ht2002}; 
Fill and Nakama~\cite{fn2009}; and Vall\'{e}e, Cl\'{e}ment, Fill, and Flajolet~\cite{vcff2009});
and all of them except for Fill and Nakama~\cite{fn2009} and Vall\'{e}e \textit{et al.}~\cite{vcff2009} have conducted the quantification with regard to the number of key comparisons required by the algorithm to achieve its task.  As a result, most of the theoretical results on the complexity of {\tt QuickSelect} are about expectations or distributions for the number of required key comparisons.

However, one can reasonably argue that analyses of {\tt QuickSelect} in terms of the number of key comparisons cannot fully quantify its complexity. For instance, if keys are represented as binary strings, then individual bits of the strings must be compared in order for {\tt QuickSelect} to complete its task, and results obtained by analyzing the algorithm with respect to the number of bit comparisons required to find a target key more accurately reflect actual execution costs. (We will consider bit comparisons as an example of symbol comparisons.)  When {\tt QuickSelect} (or any other algorithm) compares keys during its execution, it must operate on the keys' representations or internal structures, so these should not be ignored in fully characterizing the performance of the algorithm.  Also, symbol-complexity analysis allows us to compare key-based algorithms such as {\tt QuickSelect} and {\tt QuickSort} with digital algorithms such as those utilizing digital search trees.

Fill and Janson \cite{fj2004} pioneered symbol-complexity analysis by analyzing the expected number of bit comparisons required by {\QuickSort}.  They assumed that the algorithm is applied to keys that are i.i.d.\ (independent and identically distributed) from the uniform distribution over $(0, 1)$ and represented (via their binary expansions) as binary strings, and that the algorithm operates on individual bits in order to do comparisons and find the target key.  They found that the expected number of bit comparisons required by {\QuickSort} to sort~$n$ keys is asymptotically equivalent to $n(\ln n)(\lg n)$ (where $\lg$ denotes binary logarithm), whereas the lead-order term of the expected number of \emph{key} comparisons is $2n \ln n$, smaller by a factor of order $\log n$.  In their Section~6 they also considered i.i.d.\ keys drawn from other distributions with density on $(0, 1)$.

By closely following~\cite{fj2004}, Fill and Nakama \cite{fn2009} studied the expected number of bit comparisons required by {\tt QuickSelect}.  More precisely, they treated the case of i.i.d.\ uniform keys represented as binary strings and produced exact expressions for the expected number of bit comparisons by {\tt QuickSelect}$(n, m)$ for general~$n$ and~$m$.  Their asymptotic results were limited to the algorithms {\tt QuickMin}, {\tt QuickMax}, and {\tt QuickRand}.
Here {\tt QuickMin} refers to {\tt QuickSelect} applied to find the smallest key, i.e.,\ to
{\tt QuickSelect}$(n, m)$ with $m = 1$; and {\tt QuickMax} similarly refers to
{\tt QuickSelect}$(n, m)$ with $m = n$.  {\tt QuickRand} is the algorithm that results from taking~$m$ to be uniformly distributed over $\{1, 2, \dots, n\}$.
They showed that the expected number of bit comparisons required by {\tt QuickMin} or
{\tt QuickMax} is asymptotically linear in $n$ with lead-order coefficient approximately equal to $5.27938$.  Thus in these cases the expected number of bit comparisons is asymptotically larger than that of \emph{key} comparisons required to complete the same task only by a constant factor, since the expectation for key comparisons is asymptotically $2 n$.  Fill and Nakama~\cite{fn2009} also found that the expected number of bit comparisons required by {\tt QuickRand} is also asymptotically linear in $n$ (with slope approximately $8.20731$), as for key comparisons (with slope~$3$).

Vall\'{e}e \textit{et al.}\ \cite{vcff2009} extended the average-case analyses of~\cite{fj2004} and~\cite{fn2009} to keys represented by sequences of general symbols generated by any of a wide variety of sources that include memoryless, Markov, and other dynamical sources.  They broadly extended the results of~\cite{fn2009} in another direction as well by treating
{\tt QuickQuant}$(n, \alpha)$ for general $\alpha \in [0, 1]$, not just {\tt QuickMin}, {\tt QuickMax}, and {\tt QuickRand}.  Here the algorithm {\tt QuickQuant}$(n, \alpha)$ (for ``Quick Quantile'') refers to {\tt QuickSelect}$(n, m_n)$ with $m_n / n \to \alpha$.
Roughly summarized, Vall\'{e}e \textit{et al.}\ showed that
if symbols are generated by a suitably nice source, then
the expected number of symbol comparisons in processing a file of~$n$ keys is of order
$n \log^2 n$ for {\tt QuickSort} and, for any $\alpha$, of order~$n$ for
{\tt QuickQuant}$(n, \alpha)$.
(For example, all memoryless sources are suitably nice.)  For a more detailed discussion of sources and the results of Vall\'{e}e \textit{et al.}\ \cite{vcff2009} for {\tt QuickQuant}, see \refS{S:prelims}.

The main purpose of this paper is to extend the average-case analysis of Vall\'{e}e \textit{et al.}\ \cite{vcff2009} by establishing limiting distributions for the number of symbol comparisons.  To our knowledge the present paper is the first to establish a limiting distribution for the number of symbol comparisons required by any key-based algorithm.  Our elementary approach allows us to handle rather general kinds of ``cost'' for comparing two keys, and in particular to recover in a rather direct way known results about the number of key comparisons.  There is no disadvantage to allowing general costs, since our results rely on at most broad limitations on the nature of the cost.
\smallskip

\noindent
{\bf Outline of the paper.\ }We shall be concerned primarily with $\mbox{{\tt QuickQuant}} \equiv \mbox{{\tt QuickQuant}}(n, \alpha)$, which is what we call the algorithm {\tt QuickSelect} when applied to find the key of
rank $m_n$ in a file of size~$n$, where we are given $0 \leq \alpha \leq 1$ and a sequence
$(m_n)$ such that $m_n / n \to \alpha$.  It turns out to be convenient mathematically to analyze a close cousin to {\tt QuickQuant} introduced by Vall\'{e}e \textit{et al.}~\cite{vcff2009}, namely, {\tt QuickVal}, and then treat {\tt QuickQuant} by comparison.  So, after a careful description of the probabilistic models used to govern the generation of keys in \refS{SS:source}, a review of known results about key and symbol comparisons in \refS{SS:known}, and a description of {\tt QuickVal} in~\refS{SS:QQQV}, in \refS{S:QuickVal} we establish limiting-distribution results for {\tt QuickVal} (whose main theorems are \refT{T:convergenceInLp} and \refT{thm:ASConvergence}) and then move on to {\tt QuickQuant} in \refS{S:QQ} (which contains \refT{thm:convergenceOfSQp}, the main theorem of this paper).
\medskip

Subsequent to the research leading to the present paper, and using a rather different approach, the first 
author~\cite{fjfeb2010} has found a limiting distribution for the number of symbol comparisons used by {\tt QuickSort} for a wide variety of probabilistic sources. 

\begr
Although the contraction method has been used in finding limiting distributions for the number of key comparisons required by recursive algorithms such as {\tt QuickSort} (e.g., R\"{o}sler~\cite{r1991}, R\"{o}sler and R\"{u}schendorf~\cite{rr2001}), our analysis does not depend on it. In examining convergence for the number of key comparisons used by {\tt QuickQuant}, Gr\"{u}bel and R\"{o}sler~\cite{gr1996} mentioned that they did not use the contraction method due to the parameter that represents target rank.  (However, they did engage in contraction arguments to characterize the limiting distribution.)  Interestingly, Mahmoud \textit{et al}.~\cite{mms1995} succeeded in establishing a fixed point equation to identify the limiting distributions of the normalized numbers of key comparisons required by {\tt QuickRand}, {\tt QuickMin}, and {\tt QuickMax}.  R\'{e}gnier~\cite{r1989} used martingales to show convergence for the number of key comparisons required by {\tt QuickSort}.
\enr
\newpage

\section{Background and preliminaries}\label{S:prelims}
\subsection{Probabilistic source models for the keys}\label{SS:source}
In this subsection we describe what is meant by a probabilistic source, our model for how the i.i.d.\ keys are generated, using the terminology and notation of Vall\'{e} \textit{et al.}\ \cite{vcff2009}.

Let~$\Sigma$ denote a totally ordered alphabet (set of symbols), assumed to be isomorphic either to $\{0, \dots, r - 1\}$ for some finite~$r$ or to the full set of nonnegative integers, in either case with the natural order; a \emph{word} is then an element of $\Sigma^{\infty}$, i.e.,\ an infinite sequence (or ``string'') of symbols.  We will follow the customary practice of denoting a word
$w = (w_1, w_2, \ldots)$ more simply by $w_1 w_2 \cdots$.

We will use the word ``prefix'' in two closely related ways.  First, the symbol strings belonging to $\Sigma^k$ are called \emph{prefixes} of length $k$, and so $\Sigma^* := \cup_{0 \leq k < \infty} \Sigma^k$ denotes the set of all prefixes of any nonnegative finite length.  Second, if
$w = w_1 w_2 \cdots$ is a word, then we will call
\begin{equation}
\label{prefix}
w(k) := w_1 w_2 \cdots w_k \in \Sigma^k
\end{equation}
its \emph{prefix of length~$k$}.

\emph{Lexicographic order} is the linear order (to be denoted in the strict sense by $\prec$ and in the weak sense by $\preceq$) on the set of words specified by declaring that $w \prec w'$ if (and only if) for some $0 \leq k < \infty$ the prefixes of~$w$ and $w'$ of length~$k$ are equal but $w_{k + 1} < w'_{k + 1}$.  We denote the \emph{cost} of determining $w \prec w'$ when comparing distinct words~$w$ and $w'$ by $c(w, w')$; we will always assume that the
function~$c$ is symmetric and nonnegative.

\begx
\emph{
Here is an example of a natural class of cost functions.
Start with nonnegative symmetric functions $c_i:\Sigma \times \Sigma \to [0, \infty)$,
$i = 1, 2, \dots$, modeling the cost of comparing symbols in the respective $i$th positions of two words.  This allows for the symbol-comparison costs to depend both on the positions of the symbols in the words and on the symbols themselves.  Then, for comparisons of distinct words, define
\begin{equation}
\label{csymb}
c(w, w') := \sum_{i = 1}^{k + 1} c_i(w_i, w'_i) = \sum_{i = 1}^k c_i(w_i, w_i) + c_{k + 1}(w_{k + 1}, w'_{k + 1})
\end{equation}
where~$k$ is the length of the longest common prefix of~$w$ and~$w'$.
}

\emph{
(a)~If $c_i \equiv \delta_{i_0, i}$ (independent of the symbols being compared) for given positive integer $i_0$, then~$c$ is the cost used in counting comparisons of symbols in position $i_0$.  (For example, if $i_0 = 1$ then $c \equiv 1$ is the cost used in counting key comparisons.)  Observe that all finite linear combinations of such cost functions 
$\delta_{i_0, \cdot}$ are of the form~\eqref{csymb}, and therefore, by the Cram\'{e}r--Wold device (\eg,\ \cite[Section~29]{b1995}), if $S_{i_0}$ denotes the total number of comparisons of symbols in position $i_0$, then the \emph{joint} distribution of $(S_1, S_2, \dots)$ can (at least in principle) be obtained by studying cost functions of the form~\eqref{csymb}.
} 

\emph{
(b)~If $c_i \equiv 1$ for all~$i$, then $c \equiv k + 1$ is the cost used in counting symbol comparisons.
}
\enx

A \emph{probabilistic source} is simply a stochastic process $W = W_1 W_2 \cdots$ with
state space~$\Sigma$ (endowed with its total $\sigma$-field) or, equivalently, a random variable~$W$ taking values in $\Sigma^{\infty}$ (with the product $\sigma$-field).  
According to Kolmogorov's consistency criterion
(\eg,~\cite[Theorem~3.3.6]{c2001}), the 
distributions~$\mu$ of such processes are in one-to-one correspondence with consistent specifications of 
finite-dimensional marginals, that is, of the probabilities
$$
p_w := \mu(\{w_1 \cdots w_k\} \times \Sigma^{\infty}), \quad w = w_1 w_2 \cdots w_k \in \Sigma^*.
$$
Here the \emph{fundamental probability} $p_w$ is the probability that a word drawn from~$\mu$ has $w_1 \cdots w_k$ as its length-$k$ prefix.

Because the analysis of {\tt QuickSelect} is significantly more complicated when its input keys are not all distinct, we will restrict attention to probabilistic sources with continuous distributions~$\mu$.  Expressed equivalently in terms of fundamental probabilities, our continuity assumption is  that for any $w = w_1 w_2 \cdots \in \Sigma^{\infty}$ we have $p_{w(k)} \to 0$ as $k \to \infty$, recalling the prefix notation~\eqref{prefix}.

\begx
\label{sourcex}
\emph{We present a few classical examples of sources.  For more examples, and for further discussion, see Section~3 of~\cite{vcff2009}.}
\smallskip

\emph{(a)~In computer science jargon, a \emph{memoryless source} is one with $W_1, W_2, \dots$ i.i.d.\ \ Then the fundamental probabilities $p_w$ have the product form}
$$
p_w = p_{w_1} p_{w_2} \cdots p_{w_k}, \quad w = w_1 w_2 \cdots w_k \in \Sigma^*.
$$

\emph{(b)~A \emph{Markov source} is one for which $W_1 W_2 \cdots$ is a Markov chain.}
\smallskip

\emph{(c)~An intermittent source over the finite alphabet $\Sigma = \{0, \dots, r - 1\}$ models long-range dependence of the symbols within a key and is defined by specifying the conditional distributions
$\Lc(W_j\,|\,W_1, \dots, W_{j - 1})$ in a way that pays special attention to a particular symbol
$\underline \sigma$.  The source is said to be \emph{intermittent  of exponent $\gamma > 0$ with respect 
to~$\underline \sigma$} if $\Lc(W_j\,|\,W_1, \dots, W_{j - 1})$ depends only on the
maximum value~$k$ such that the last~$k$ symbols in the prefix $W_1 \cdots W_{j - 1}$ are all
$\underline \sigma$ and (i)~is the uniform distribution on~$\Sigma$, if $k = 0$; and (ii)~if $1 \leq k \leq j -1$, assigns mass $[k / (k + 1)]^{\gamma}$ to~$\underline \sigma$ and distributes the remaining mass uniformly over the remaining elements of~$\Sigma$.
}
\enx

We next present an equivalent description of probabilistic sources (with a corresponding equivalent condition for continuity) that will prove convenient because it allows us to treat all sources within a uniform framework.  If~$M$ is any measurable mapping from $(0, 1)$ (with its Borel $\sigma$-field) into $\Sigma^{\infty}$ and~$U$ is distributed
unif$(0, 1)$, then $M(U)$ is a probabilistic source.  Conversely, given any probability
measure~$\mu$ on $\Sigma^{\infty}$ there exists a monotone measurable mapping~$M$ such that $M(U)$ has distribution~$\mu$ when $U \sim \mbox{unif}(0, 1)$; here (weakly) \emph{monotone} means that $M(t) \preceq M(u)$ whenever $t \leq u$.  Indeed, if~$F$ is the distribution function
$$
F(w) := \mu\{w' \in \Sigma^{\infty}:w' \preceq w\}, \quad w \in \Sigma^{\infty},
$$
for~$\mu$, then we can always use the inverse probability transform
$$
M(u) := \inf\{w \in \Sigma^{\infty}:u \preceq F(w)\}, \quad u \in (0, 1)
$$
for~$M$.  The measure~$\mu$ is continuous if and only if this~$M$ is strictly monotone.

So henceforth we will assume that our keys are generated as $M(U_1), \dots, M(U_n)$, where $M:(0, 1) \to \Sigma^{\infty}$ is strictly monotone and $U_1, \dots, U_n$ (we will call these the ``seeds'' of the keys) are i.i.d.\ unif$(0, 1)$.  Given a specification of costs $c(w, w')$ in comparing words, we can now define a source-specific notion of cost by setting
$$
\beta(u, t) := c(M(u), M(t)).
$$
In our main application, $\beta_{\mathrm{symb}}(u, t)$ represents the number of symbol comparisons required to compare words with seeds~$u$ and~$t$.

The following associated terminology and notation from~\cite{vcff2009} will also prove useful.  For each prefix $w \in \Sigma^*$, we let $\mathcal{I}_w = (a_w, b_w)$ denote the interval that contains all seeds whose corresponding words begin with~$w$ and $\mu_w := (a_w + b_w) / 2$ its midpoint.  We call $\mathcal{I}_w$ the \emph{fundamental interval} associated with~$w$.
(There  is no need to be fussy as to whether the interval is open or closed or half-open, because the probability that a random seed~$U$ takes any particular value is~$0$.  Also, we always assume that $a_w < b_w$, since the case that $a_w = b_w$ will not concern us.)
The fundamental probability $p_w$ can be expressed as $b_w - a_w$.  The \emph{fundamental triangle} of prefix~$w$, denoted by
$\mathcal{T}_w$, is the triangular region
$$
\mathcal{T}_w := \{(u, t): a_w < u < t < b_w\},
$$
and when~$w$ is the empty prefix we denote this triangle by~$\mathcal{T}$:
$$
\mathcal{T} := \{(u, t): 0 < u < t < 1\}.
$$

For some of our results, the quantity
\begin{equation}
\label{pikdef}
\pi_k := \max\{p_w: w \in \Sigma^k\}
\end{equation}
will play an important role.  
The following definition of a $\Pi$-tame probabilistic source is taken (with slight modification) 
from~\cite{vcff2009}:

\begin{definition}
Let $0 < \gamma < \infty$
and $0 < A < \infty$.  We say that the source is \emph{$\Pi$-tame (with 
parameters~$\gamma$ and~$A$)} if the sequence $(\pi_k)$ at~\eqref{pikdef} satisfies
$$
\pi_k \leq A (k + 1)^{- \gamma}\mathrm{\ for\ every\ }k \geq 0.
$$
\end{definition}

Observe that a $\Pi$-tame source is always continuous.  There is a related condition for cost functions~$\beta$ that will be assumed (for suitable values of the parameters) in some of our results:

\begin{definition}
\label{etame}
Let $0 < \epsilon < \infty$ and $0 < c < \infty$.  We say that the symmetric cost function $\beta \geq 0$ is \emph{tame (with parameters~$\epsilon$ and~$c$)} if
$$
\beta(u, t) \leq c (t - u)^{- \epsilon}\mathrm{\ for\ all\ }(u, t) \in T.
$$
We say that~$\beta$ is \emph{$\epsilon$-tame} if it is tame with parameters~$\epsilon$ and~$c$ for some~$c$. 
\end{definition}

We leave it to the reader to make the simple verification that a source is $\Pi$-tame with parameters~$\gamma$ 
and~$A$ if and only if $\beta_{\mathrm{symb}}$ is tame with parameters $\epsilon = 1 / \gamma$ and $c = A^{1 / \gamma}$.

\begr
\label{R:g-tame}
(a)~Many common sources have geometric decrease in $\pi_k$ (call these ``g-tame'') and so for \emph{any} 
$\gamma$ are $\Pi$-tame with parameters~$\gamma$ and~$A$ for suitably chosen $A \equiv A_{\gamma}$ [equivalently, the symbol-comparisons
cost~$\beta_{\mathrm{symb}}$ is $\epsilon$-tame for any~$\epsilon$; in fact, if $\pi_k \leq b^{- k}$ for every~$k$, then
\beqnn
\beta_{\mathrm{symb}}(u,t) \leq 1 + \log_{b}\frac{1}{t-u}\mathrm{\ for\ all\ }(u, t) \in \mathcal{T}].
\eeqnn

For example, a memoryless source satisfies $\pi_k = p^k_{\max}$, where
$$
p_{\max} := \sup_{w \in \Sigma^1} p_w
$$
satisfies $p_{\max} < 1$ except in the highly degenerate case of an essentially single-symbol alphabet.  We also have $\pi_k \leq p^k_{\max}$ for any Markov source, where now $p_{\max}$ is the supremum of all one-step transition probabilities, and so such a source is g-tame provided $p_{\max} < 1$.  Expanding dynamical sources 
(\cf\ Cl\'{e}ment, Flajolet, and Vall\'{e}e~\cite{cfv2001})
are also g-tame.
\smallskip

(b)~For an intermittent source as in Example~\ref{sourcex}, for all large~$k$ the maximum probability $\pi_k$ is attained by the word $\underline \sigma^k$ and equals
$$
\pi_k = r^{-1} k^{- \gamma}.
$$
Intermittent sources are therefore examples of $\Pi$-tame sources for which $\pi_k$ decays at a truly inverse-polynomial rate, not an exponential rate as in the case of g-tame sources.
\enr

\subsection{Known results for the numbers of key and symbol comparisons}\label{SS:known}
In this subsection we give for {\tt QuickSelect} an abbreviated review of what is already known about the distribution of the number of key comparisons ($\beta \equiv 1$ in our notation) and (from Vall\'{e}e\ \textit{et al.}\ \cite{vcff2009}) about the expected number of symbol comparisons ($\beta = \beta_{\mathrm{symb}}$).  To our knowledge, no other cost functions have previously been considered, nor has there been any treatment of the full distribution of the number of symbol comparisons.

Let $K_{n,m}$ denote the number of key comparisons required by the algorithm to find a key of rank $m$ in a file of $n$ keys (with $1\leq m\leq n$).  Thus $K_{n,1}$ and $K_{n,n}$ represent the key comparison costs required by {\tt QuickMin} and {\tt QuickMax}, respectively.
(Clearly $K_{n,1}\Leq K_{n,n}$).  It has been shown (see Mahmoud \textit{et al}.~\cite{mms1995}, Hwang and Tsai~\cite{ht2002}) that as $n \rightarrow \infty$, $K_{n,1} / n$ converges in law to the Dickman distribution, which can be described as the distribution of the perpetuity
\beqnn
1 + \sum_{k\geq 1} U_1\cdots U_k,
\eeqnn
where $U_k$ are i.i.d.\ uniform$(0, 1)$.  Mahmoud \textit{et al}.~\cite{mms1995}
established a fixed-point equation for the limiting distribution of the normalized (by dividing by~$n$) number of key comparisons required by {\tt QuickRand} and also explicitly identified this limiting distribution.

By using process-convergence techniques, Gr\"{u}bel and R\"{o}sler~\cite[Theorem~8]{gr1996} identified, for each $0 \leq \alpha < 1$, a nondegenerate random variable $K(\alpha)$ to which $K_{n, \lf \alpha n \rf + 1} / n$ converges in distribution; see also the fixed-point equation in their Theorem~10, and Gr\"{u}bel~\cite{g1998}, who used a Markov chain approach and characterized the limiting distribution in his Theorem~3.  Earlier, Devroye~\cite{d1984} had shown that
$$
\sup_{n \geq 1}\,\max_{1 \leq m \leq n} \PP(K_{n, m} \geq t n) \leq C \rho^t
$$
for any $\rho > 3/4$ and some $C \equiv C(\rho)$.

Concerning moments, Gr\"{u}bel and R\"{o}sler~\cite[Theorem~11]{gr1996} showed that
$\EE\,K(\alpha) = 2 [1 - \alpha \ln \alpha - (1 - \alpha) \ln(1 - \alpha)]$ and Paulsen~\cite{pa1995} calculated higher-order moments of $K(\alpha)$.  Gr\"{u}bel~\cite[end of Section~2]{g1998} proved convergence of the moments for finite~$n$ to the corresponding moments of the limiting $K(\alpha)$.

Prior to the present paper, only expectations have been studied for the number of symbol comparisons for {\tt QuickQuant}.  The current state of knowledge is summarized by part~(i) of Theorem~2 in Vall\'{e}e \textit{et al.}~\cite{vcff2009} (see also their accompanying Figures 1--3); we refer the reader to~\cite{vcff2009} for the other parts of the theorem, which routinely specialize part~(i) to {\tt QuickMin}, {\tt QuickMax}, and {\tt QuickRand}.

To review their result we need the notation and terminology of \refS{SS:source} and a bit more.
Using the non-standard abbreviations $y^+ := (1/2) + y$ and $y^- := (1/2) - y$ and the convention $0 \ln 0 := 0$, we define
$$
H(y) :=
\begin{cases}
- (y^+ \ln y^+ + y^- \ln y^-), & \mathrm{if\ }0 \leq y \leq 1/2 \\
y^- (\ln y^+ - \ln | y^- |), & \mathrm{if\ }y \geq 1/2
\end{cases}
$$
and then set $L(y) := 2 [1 + H(y)]$.
According to Theorem~2(i) in~\cite{vcff2009}, for any $\Pi$-tame source the mean number of symbol comparisons for {\tt QuickQuant}$(n, \alpha)$ is asymptotically $\rho\,n + O(n^{1 - \delta})$ for some $\delta > 0$.  Here $\rho \equiv \rho(\alpha)$ and~$\delta$ both
depend on the probabilistic source, with
\begin{equation}
\label{rhodef}
\rho := \sum_{w \in \Sigma^*} p_w L\left( \left| \frac{\alpha - \mu_w}{p_w} \right| \right).
\end{equation}
They derive~\eqref{rhodef} by first proving 
the equality
\begin{equation}
\label{rhoint}
\rho = \int_{\cal T} \beta(u, t) \left[ (\alpha \vee t) - (\alpha \wedge u) \right]^{-1}\,du\,dt
\end{equation}
for $\Pi$-tame sources with $\gamma > 1$.

\subsection{{\tt QuickQuant} and {\tt QuickVal}}\label{SS:QQQV}
Let $\SQ \equiv \SQ(\alpha)$ denote the total cost required by {\tt QuickQuant}$(n, \alpha)$.
To prove convergence of $\SQ / n$ (in suitable senses to be made precise later), we exploit an idea introduced by Vall\'{e}e \textit{et al.}\ \cite{vcff2009} and begin with the study of a related algorithm, called ${\mathrm {\tt QuickVal}} \equiv \mathrm{{\tt QuickVal}}(n, \alpha)$, which we now describe.  {\tt QuickVal} is admittedly somewhat artificial and inefficient; it is important to keep in mind that we study it mainly as an aid to studying {\tt QuickQuant}.

Having generated $n$ seeds and then $n$ keys $M_1, \dots, M_n$ (say) using our probabilistic source, {\tt QuickVal} is a recursive randomized algorithm to find the rank of the additional word $M(\alpha)$ in the set $\{M_1, \dots, M_n, M(\alpha)\}$; thus, while {\tt QuickQuant} finds the value of the $\alpha$-quantile in the \emph{sample} of keys, {\tt QuickVal} dually finds the rank of the \emph{population} $\alpha$-quantile in the augmented set.  First, {\tt QuickVal} selects a pivot uniformly at random from the set of keys $\{M_1, \dots, M_n\}$ and finds the rank of the pivot by (a)~comparing the pivot with each of the other keys (we will count these comparisons) and (b)~comparing the pivot with $M(\alpha)$ (we will find it convenient not to count the cost of this comparison in the total cost).  With probability one, the pivot key will differ from the word $M(\alpha)$.  If $M(\alpha)$ is smaller than the pivot key, then the algorithm operates recursively on the set of keys smaller than the pivot and determines the rank of the word $M(\alpha)$ in the set $\mathcal{M}_{\mathrm{smaller}} \cup \{M(\alpha)\}$, where $\mathcal{M}_{\mathrm{smaller}}$ denotes the set of keys smaller than the pivot. Similarly, if $M(\alpha)$ is greater than the pivot key, then the algorithm operates recursively on the set of keys larger than the pivot [together with the word $M(\alpha)$].  Eventually the set of words on which the algorithm operates reduces to the singleton $\{M(\alpha)\}$, and the algorithm terminates.

Notice that the operation of {\tt QuickVal} is quite close to that of
{\tt QuickQuant}, for the same value of~$\alpha$; we expect running costs of the two algorithms to be close, since when~$n$ is large the rank of $M(\alpha)$ in $\{M_1, \dots, M_n, M(\alpha)\}$ should be close (in relative error terms) to $\alpha n$.  In fact, we will show that if $\SV \equiv \SV(\alpha)$ denotes the total cost of executing {\tt QuickVal}$(n, \alpha)$, then $\SQ / n$ and $\SV / n$ have the same limiting distribution, assuming only that the cost function~$\beta$ is $\epsilon$-tame for suitably small~$\epsilon$.  In fact, we will show that when all the random variables $S^{\mathrm{Q}}_1, S^{\mathrm{Q}}_2, \dots$ and $S^{\mathrm{V}}_1, S^{\mathrm{V}}_2, \dots$ are strategically defined on a common probability space, then $\SQ / n$ and $\SV / n$ both converge in $L^p$ to a common limit for $1 \leq p < \infty$.

\section{Analysis of {\tt QuickVal}}\label{S:QuickVal}
Following some preliminaries in \refS{SS:QValpreliminaries}, in
Section~\ref{SS:QValConvergenceInLp} we show that for $1 \leq p < \infty$, a suitably defined $\SV / n$ converges in $L^p$ to a certain random variable~$S$ (defined at the end of Section~\ref{SS:QValpreliminaries}) provided only that $\EE\,S < \infty$.  We also show that, when the cost function is suitably tame, $\SV / n$ converges almost surely to $S$; see Theorem~\ref{thm:ASConvergence} in Section~\ref{SS:QValASConvergence}.  We derive an integral expression for $\EE\,S$ valid for a completely general cost function~$\beta$ in
Section~\ref{SS:QValIntegralExpression} and use it to compute the expectation when
 $\beta \equiv 1$.  In Section~\ref{SS:QValSeriesExpression}, we focus on $\EE\,S$ with $\beta = \beta_{\mathrm{symb}}$ and derive a series expression for the expectation.  Few comparisons of results obtained here with the known results reviewed in \refS{SS:known} are made in the present section; most such comparisons are deferred to (the first paragraph of) \refS{S:QQ}, where the previously-studied algorithm of greater interest, {\tt QuickQuant}, is treated.

\subsection{Preliminaries}\label{SS:QValpreliminaries}
Our goal is to establish a limit, in various senses, for the ratio of the total cost
required by {\tt QuickVal} when applied to a file of $n$ keys to~$n$.  It will be both natural and convenient to define all these total costs, one for each value of~$n$, in terms of a single infinite sequence $(U_i)_{i \geq 1}$ of seeds that are i.i.d.\ uniform(0,~1).  Indeed, let $L_0 := 0$ and $R_0:= 1$.  For $k \geq 1$, inductively define
\begin{eqnarray}
\tau_k &:=& \inf\{i:\, L_{k-1} < U_i < R_{k-1}\},\label{pivotIndex} \\
L_k &:=& {\bf 1}(U_{\tau_k} < \alpha) U_{\tau_{k}} + {\bf 1}(U_{\tau_k} > \alpha) L_{k-1},\label{lowerBound} \\
R_k &:=& {\bf 1}(U_{\tau_k} < \alpha) R_{k-1} + {\bf 1}(U_{\tau_k} > \alpha) U_{\tau_{k}},\label{upperBound} \\
S_{n, k} &:=& \sum_{i:\,\tau_k < i \leq n} {\bf 1}(L_{k-1} < U_i < R_{k - 1})\,\beta(U_i, U_{\tau_k}).\label{Snk}
\end{eqnarray}
(Note that $S_{n, k}$ vanishes if $\tau_k \geq n$.)
We then claim that, for each~$n$,
\begin{equation}
\label{SV}
\SV := \sum_{k \geq 1} S_{n, k}
\end{equation}
has the distribution of the total cost required by {\tt QuickVal}$(n, \alpha)$.

We offer some explanation here.  For each $k \geq 1$, the random interval $(L_{k - 1}, R_{k - 1})$ (whose length decreases monotonically in~$k$) contains both the target seed~$\alpha$ and the seed $U_{\tau_k}$ corresponding to the $k$th pivot; the interval contains precisely those seed values still under consideration after $k - 1$ pivots have been performed.  The only difference between how we have defined~$\SV$ and how it is usually defined is that we have chosen the initial pivot seed to be the \emph{first} seed rather than a random one, and have made this same change recursively.  But our change is permissible because of the following basic probabilistic fact: If $U_1, \dots, U_N, M$ are independent random variables with $U_1, \dots, U_N$ i.i.d.\ uniform$(0, 1)$ and~$M$ uniformly distributed on $\{1, \dots, N\}$, then $U_M$, like $U_1$, is distributed uniform$(0, 1)$.  Thus the conditional distribution of $U_{\tau_k}$ given $(L_{k - 1}, R_{k - 1})$ is uniform$(L_{k - 1}, R_{k - 1})$.

We illustrate our notation for the first two pivots.  First, $\tau_1 = 1$; that is, the seed of the first pivot is the uniform$(0, 1)$ random variable $U_1$.  After that, if $\alpha < U_1$ then the seed $U_{\tau_2}$ of the second pivot is chosen as the first seed falling in $(0, U_1)$, while if $\alpha > U_1$ then $U_{\tau_2}$ is the first seed falling in $(U_1, 1)$.  We note that if
$\alpha = 0$ (which means that we are dealing with the total cost required by {\tt QuickMin}), then the first of these two cases is always the one that applies and so for every $k \geq 1$ we have $L_k = 0$ and $R_k = U_{\tau_k}$; we then have that $U_{\tau_k}$ is just the $k$th record low value among $U_1, U_2, \dots$.

In order to describe the limit of $\SV / n$, we let
\begin{eqnarray}
I(t, x, y) &:=& \int_x^y\!\beta(u, t)\,du,\nonumber\\
I_k &:=& I(U_{\tau_k}, L_{k - 1}, R_{k-1}),\label{I_k}\\
S &:=& \sum_{k \geq 1} I_k.\label{S}
\end{eqnarray}
Notice that in the case $\beta \equiv 1$ of key comparisons we have $I(t, x, y) \equiv y - x$ and so $I_k = R_{k - 1} - L_{k - 1}$.

In Section~\ref{SS:QValConvergenceInLp} we show for $1 \leq p < \infty$ that $\SV / n$ converges in $L^p$ to~$S$ as $n \rightarrow \infty$ under proper technical conditions.  Under a stronger assumption, we will also prove almost sure convergence in Section~\ref{SS:QValASConvergence}.

\subsection{Convergence of $\SV / n$ in $L^p$ for $1 \leq p < \infty$}\label{SS:QValConvergenceInLp}
Theorem~\ref{T:convergenceInLp} is our main result concerning {\tt QuickVal}.  To state the result, we need the following notation, extending that of~\eqref{I_k}:
\begin{eqnarray}
I_p(t, x, y) &:=& \int_x^y\!\beta^p(u, t)\,du,\nonumber\\
I_{p, k}&:=& I_p(U_{\tau_k}, L_{k - 1}, R_{k-1}),\label{Ipk}.
\end{eqnarray}

\begin{theorem}\label{T:convergenceInLp}
If $1 \leq p < \infty$ and
\begin{equation}
\label{techp}
\sum_{k \geq 1} (\EE\,I_{p, k})^{1 / p} < \infty,
\end{equation}
then $\SV / n$ converges in $L^p$ (and therefore also in probability and in distribution)
to~$S$ as $n \to \infty$.
\end{theorem}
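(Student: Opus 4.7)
The plan is to establish termwise $L^p$-convergence in the series representations $\SV/n = \sum_{k \geq 1} S_{n,k}/n$ and $S = \sum_{k \geq 1} I_k$, together with a summable dominating bound uniform in~$n$, so that Minkowski's inequality and dominated convergence on the counting measure in~$k$ finish the job.  Let $\Fc_k := \sigma(U_1, \ldots, U_{\tau_k})$.  Since $\tau_k$ is a stopping time for the i.i.d.\ sequence $(U_i)$, the strong Markov property ensures that $(U_i)_{i > \tau_k}$ are i.i.d.\ unif$(0,1)$ and independent of $\Fc_k$; together with the $\Fc_k$-measurability of $L_{k-1}$, $R_{k-1}$, and $U_{\tau_k}$, this makes the summands $X_i := {\bf 1}(L_{k-1} < U_i < R_{k-1})\,\beta(U_i, U_{\tau_k})$ appearing in~\eqref{Snk} for $i > \tau_k$ conditionally i.i.d.\ given $\Fc_k$, with conditional mean $I_k$ and conditional $p$-th absolute moment $I_{p,k}$.

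Fix $k$; I would show $\|S_{n,k}/n - I_k\|_p \to 0$ as $n \to \infty$.  On $\{\tau_k \leq n\}$ (a set of probability tending to one), $S_{n,k} = \sum_{i=\tau_k+1}^{n} X_i$, so the classical $L^p$ strong law applied to the regular conditional distribution given $\Fc_k$ gives $\EE\!\left[\,|S_{n,k}/n - I_k|^p \mid \Fc_k\,\right] \to 0$ almost surely.  Conditional Minkowski yields $\EE[(S_{n,k}/n)^p \mid \Fc_k] \leq I_{p,k}$, and Jensen's inequality applied to the uniform probability measure on $(L_{k-1}, R_{k-1})$ gives $I_k^p \leq (R_{k-1}-L_{k-1})^{p-1} I_{p,k} \leq I_{p,k}$; therefore the conditional quantity above is dominated by $2^p I_{p,k}$, which is integrable by hypothesis~\eqref{techp}.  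Dominated convergence then delivers $\EE\,|S_{n,k}/n - I_k|^p \to 0$, and the same estimates yield the $n$-uniform bound $\|S_{n,k}/n - I_k\|_p \leq 2(\EE\,I_{p,k})^{1/p}$.

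Minkowski's inequality for countable sums then produces
\begin{equation*}
\|\SV/n - S\|_p \leq \sum_{k \geq 1} \|S_{n,k}/n - I_k\|_p,
\end{equation*}
and the right-hand side tends to $0$ by dominated convergence on the counting measure in~$k$, using the pointwise (in~$k$) vanishing established above together with the summable dominator $2(\EE\,I_{p,k})^{1/p}$ from hypothesis~\eqref{techp}.  The main technical delicacy is the conditional $L^p$-SLLN together with its unconditionalization: one must verify $I_{p,k} < \infty$ a.s.\ (automatic from $\EE\,I_{p,k} < \infty$), apply the classical $L^p$ strong law inside $\Fc_k$, and extract unconditional $L^p$-convergence via DCT; everything else is bookkeeping with Minkowski, Jensen, and the triangle inequality.
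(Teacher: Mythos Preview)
Your proposal is correct and follows essentially the same route as the paper: condition on the pivot information, apply the $L^p$ law of large numbers to get conditional convergence of $S_{n,k}/n$ to $I_k$, dominate by $2^p I_{p,k}$ to unconditionalize via DCT, and then pass to the series by Minkowski plus dominated convergence on~$k$ with the summable bound $(\EE\,I_{p,k})^{1/p}$.  The only cosmetic difference is that the paper first proves convergence for $S_{n,k}/(n-\tau_k)$ and then separately shows $d_{n,k} := \EE\,|S_{n,k}/n - S_{n,k}/(n-\tau_k)|^p \to 0$, whereas you absorb this step by noting that, conditionally on $\Fc_k$, the factor $(n-\tau_k)/n$ is a deterministic constant tending to~$1$; this is a slight streamlining but not a different idea.
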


\begr\label{rmk:L1}
For $p=1$, notice that the assumption of Theorem~\ref{T:convergenceInLp} only requires that $\EE\,S < \infty$, which is equivalent to the assertion that
$\sum_{k \geq 1} \EE\,I_k < \infty.$
\enr

\begin{proof}
We use $\| \cdot \|$ to denote $L^p$-norm.  
We will utilize the  
$L^p$ law of large numbers ($L^p$LLN),
which
asserts that for $1 \leq p < \infty$ and i.i.d.\ random variables $\xi_1, \xi_2, \dots$ with finite $L^p$-norm, the sample means $\bar{\xi}_n = n^{-1} \sum_{i = 1}^n \xi_i$ converge in $L^p$ to the expectation.  Because the $L^p$LLN is not as well known as the strong law of large numbers, we provide a proof.  We may assume with no loss of generality that $\EE\,\xi_1 = 0$.  Let $Z_{- n} := \bar{\xi}_n$ for $n = 1, 2, \dots$; then $Z$ is a martingale (see, \eg,\ \cite[proof of Theorem~9.5.6]{c2001}), and therefore the process $(|Z_n|)^p)_{n = \dots, -2, -1}$ is a nonnegative submartingale.  It therefore follows 
\cite[Theorem~9.4.7 (d) $\Rightarrow$ (b)]{c2001} that 
$|\bar{\xi}_n|^p$ converges in $L^1$ to~$0$ as $n \to \infty$, as desired. 

Returning to the setting of the theorem, fix~$k$.  Conditionally
given the quadruple $C_k = (L_{k-1}, R_{k-1}, \tau_k, U_{\tau_k})$, the random variables $U_i$ with $i > \tau_k$ are i.i.d.\ uniform$(0, 1)$.  By the $L^p$LLN we
have [using the convention $0 / 0 = 0$ for $S_{n, k} / (n - \tau_k)$ when $n = \tau_k$]
\begin{equation}
\label{condLp}
\EE\left[ \left. \left| \frac{S_{n, k}}{n - \tau_k} - I_k \right|^p\,\right|\,C_k \right]\,\asto\,0\mbox{\ \ as $n \to \infty$}
\end{equation}
since, with~$U$ uniformly distributed and independent of all the $U_i$'s,
\begin{equation}
\label{IpkE}
\EE[{\bf 1}(L_{k-1} < U < R_{k-1})\,\beta(U, U_{\tau_k})\,|\,C_k]
= I_k.
\end{equation}
For our conditional application of the $L^p$LLN in~\eqref{condLp}, it is sufficient to assume only that the probabilistic source and the cost function $\beta \geq 0$ are such that $I_{p, k}$ is
a.s.\ finite, and this clearly holds by~\eqref{techp}.

Our next goal is to show that the left side
of~\eqref{condLp} is dominated by a single random variable (depending on the fixed value
of~$k$) with finite expectation, and then we will apply the dominated convergence theorem.
For every~$n$, using the convexity of $x^p$ for $x > 0$ we obtain
$$
\EE\left[ \left. \left| \frac{S_{n, k}}{n - \tau_k} - I_k \right|^p\,\right|\,C_k \right]
\leq 2^{p - 1} \left( \EE\left[ \left. \left( \frac{S_{n, k}}{n - \tau_k} \right)^p\,\right|\,C_k \right]  + I^p_k \right).
$$
We claim that each of the two terms multiplying $2^{p - 1}$ on the right here is bounded by $I_{p, k}$.
First, using the triangle inequality for conditional $L^p$-norm given $C_k$, the fact that the random variables summed to obtain $S_{n, k}$ are conditionally i.i.d.\ given $C_k$, and the
definition~\eqref{Ipk} of $I_{p, k}$, we can bound the $p$th root of the first term by
\begin{eqnarray}
\lefteqn{\hspace{-.2in}\left\{ \EE\left[ \left. \left( \frac{S_{n, k}}{n - \tau_k} \right)^p\,\right|\,C_k \right]
\right\}^{1 / p}} \nonumber \\
&\leq& \frac{1}{n - \tau_k} \sum_{i:\tau_k < i \leq n} \left\{ \EE\left[ {\bf 1}(L_{k - 1} < U_i < R_{k - 1}) \beta^p(U_i, U_{\tau_k})\,|\,C_k \right] \right\}^{1 / p} \nonumber \\
&=& \left\{ \EE\left[ {\bf 1}(L_{k - 1} < U < R_{k - 1}) \beta^p(U, U_{\tau_k})\,|\,C_k \right]
\right\}^{1 / p} = I_{p, k}^{1 / p} \label{recall}
\end{eqnarray}
with~$U$ as at~\eqref{IpkE}.   For the second term we observe that
$[I_k / (R_{k - 1} - L_{k - 1})]^p$ is the $p$th power of the absolute value of a uniform average and so is bounded by the corresponding uniform average of absolute values of $p$th powers, namely, $I_{p, k} / (R_{k - 1} - L_{k - 1})$; thus
\begin{equation}
\label{Ipkineq}
I^p_k \leq (R_{k - 1} - L_{k - 1})^{p - 1} I_{p, k} \leq I_{p, k}.
\end{equation}
So we conclude that
$$
\EE\left[ \left. \left| \frac{S_{n, k}}{n - \tau_k} - I_k \right|^p\,\right|\,C_k \right] \leq 2^p I_{p, k}.
$$
Thus it follows from $\EE\,I_{p, k} < \infty$ [which follows from \eqref{techp}] and the dominated convergence theorem that
\begin{equation}
\label{Lptilde}
\EE \left| \frac{S_{n, k}}{n - \tau_k} - I_k \right|^p \to 0\mbox{\ \ as $n \to \infty$}.
\end{equation}

Next, we will show from~\eqref{Lptilde} that, for each $k$,
\begin{equation}
\label{Lp}
\EE \left| \frac{S_{n, k}}{n} - I_k \right|^p \to 0\mbox{\ \ as $n \to \infty$}
\end{equation}
by proving that
$$
d_{n, k} \equiv d_{p, n, k} := \EE \left| \frac{S_{n, k}}{n} - \frac{S_{n, k}}{n - \tau_k} \right|^p = \EE \left( \frac{\tau_k}{n} \frac{S_{n, k}}{n - \tau_k} \right)^p
$$
vanishes in the limit as $n \to \infty$.  Indeed, the corresponding conditional expectation given $C_k$ is
$$
{\bf 1}(\tau_k < n) \left( \frac{\tau_k}{n} \right)^p \EE\left[ \left. \left( \frac{S_{n, k}}{n - \tau_k} \right)^p\,\right|\,C_k \right] \leq {\bf 1}(\tau_k < n) \left( \frac{\tau_k}{n} \right)^p I_{p, k}
$$
recalling the inequality~\eqref{recall}.
So again using $\EE\,I_{p, k} < \infty$ and
applying the dominated convergence theorem we find that $d_{n, k} \to 0$, as desired.

Finally, we show that $\SV / n$ converges to~$S$ in $L^p$. Since we have termwise
\mbox{$L^p$-convergence} of
$\SV / n$ to~$S$ by~\eqref{Lp}, the triangle inequality for $L^p$-norm and the dominated convergence theorem for sums imply that
$\SV / n$ converges in $L^p$ to~$S$ provided we can find a summable sequence $b_k$ such that
$$
\max \left\{ \sup_{n \geq 1} \left\| \frac{S_{n, k}}{n} \right\|_p, \left\|I_k\right\|_p \right\} \leq b_k.
$$
But, for any $n \geq 1$, we have [by taking $p$th powers in~\eqref{recall}, then taking expectations, then taking $p$th roots]
$$
\left\| \frac{S_{n, k}}{n} \right\|_p \leq \left\| \frac{S_{n, k}}{n - \tau_k} \right\|_p \leq
(\EE\,I_{p, k})^{1 / p}.
$$
Further, $\| I_k \|_p \leq (\EE\,I_{p, k})^{1 / p}$ follows from~\eqref{Ipkineq}.
Finally, $b_k := (\EE\,I_{p, k})^{1 / p}$ is assumed to be summable.  Thus $\SV / n$ converges to~$S$ in~$L^p$.
\end{proof}

\begr
\label{rmk:keys}
Letting $K_n$ denote the number of key comparisons required by {\tt QuickVal}$(n, \alpha)$, we find from Theorem~\ref{T:convergenceInLp} with $\beta \equiv 1$ that $K_n / n$ converges in $L^p$ ($1 \leq p < \infty$) to
$$
K := \sum_{k = 0}^{\infty} (R_k - L_k).
$$
(In Section~\ref{SS:QValIntegralExpression}, we will explicitly show the required condition
that $\EE\, K < \infty$; see Remark~\ref{rmk:finiteESforKeys}.)

Suppose $\alpha = 0$; then
the number of key comparisons $K_n$ for {\tt QuickVal}$(n, \alpha)$ is the same as for
{\tt QuickMin}.
In this case Theorem~\ref{T:convergenceInLp} gives
\begin{equation}
\label{keys}
\frac{K_n}{n}\,\Lpto\,K = 1 + \sum_{k \geq 1} U_{\tau_k}
\end{equation}
for $1 \leq p < \infty$.  The limiting random variable~$K$ has mean~$2$ and the same so-called Dickman distribution as the perpetuity
\begin{equation}
\label{perp}
1 + \sum_{k \geq 1}^{\infty} U_1 \cdots U_k.
\end{equation}
That~\eqref{keys}--\eqref{perp} holds is well known (e.g.,\ Mahmoud \textit{et al}.~\cite{mms1995}, Hwang and Tsai~\cite{ht2002}).
\enr

\subsection{Almost Sure Convergence of $\SV / n$}\label{SS:QValASConvergence}
Under a tameness assumption, we can also show that $\SV / n$ converges to $S$ almost surely.
(Recall Definition~\ref{etame}.)

\begin{theorem}\label{thm:ASConvergence}
Suppose that the cost~$\beta$ is $\epsilon$-tame for some $\epsilon < 1/4$. Then $\SV / n$ defined at~\eqref{SV} converges to $S$ almost surely.
\end{theorem}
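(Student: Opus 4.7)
The plan is to decompose $\SV/n - S$ into three pieces and show each vanishes almost surely, with the non-trivial piece handled by upgrading an $L^4$-bound to a polynomial $L^4$-rate and invoking Borel--Cantelli. Setting $T_k := n - \tau_k$ and using $S_{n,k} = 0$ on $\{\tau_k > n\}$, I write
\beqnn
\frac{\SV}{n} - S = A_n - B_n - D_n,
\eeqnn
where
\beqnn
A_n := \sum_{k:\,\tau_k\leq n}\frac{S_{n,k} - T_k I_k}{n},\quad B_n := \sum_{k:\,\tau_k\leq n}\frac{\tau_k}{n}I_k,\quad D_n := \sum_{k:\,\tau_k > n} I_k.
\eeqnn

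The two nonnegative pieces $B_n$ and $D_n$ yield to soft arguments. Since $S = \sum_k I_k < \infty$ a.s.\ (cf.\ \refR{rmk:L1}) and each $\tau_k$ is a.s.\ finite (the interarrival $\tau_k - \tau_{k-1}$ is conditionally Geometric with a.s.\ positive parameter $R_{k-1}-L_{k-1}$), dominated convergence for sums gives $D_n \to 0$ a.s.\ For $B_n$, truncating at any fixed $K$ produces $B_n \leq n^{-1}\sum_{k\leq K}\tau_k I_k + \sum_{k > K} I_k$; the first summand vanishes as $n\to\infty$ for each fixed $K$, and the second vanishes a.s.\ as $K\to\infty$, forcing $\limsup_n B_n = 0$ a.s.

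For $A_n$ the target is $\|A_n\|_4 = O(n^{-1/2})$. Conditioning on the quadruple $(L_{k-1},R_{k-1},\tau_k, U_{\tau_k})$ used in the proof of \refT{T:convergenceInLp}, the variable $S_{n,k} - T_k I_k$ on $\{\tau_k \leq n\}$ is a sum of $T_k$ iid centered summands whose conditional $p$th moment is bounded by $2^p I_{p,k}$ (using $I_k^p \leq I_{p,k}$ by Jensen). Rosenthal's inequality then gives
\beqnn
\EE\bigpar{(S_{n,k} - T_k I_k)^4 \mid L_{k-1},R_{k-1},\tau_k,U_{\tau_k}}\mathbf{1}(\tau_k\leq n) \leq C\bigpar{T_k I_{4,k} + T_k^2 I_{2,k}^2},
\eeqnn
so that, using $T_k \leq n$, the $k$th summand $Z_k^n$ of $A_n$ satisfies $\EE (Z_k^n)^4 \leq C[n^{-3}\,\EE I_{4,k} + n^{-2}\,\EE I_{2,k}^2]$. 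The $\epsilon$-tameness bound $\beta(u,t) \leq c(t-u)^{-\epsilon}$, together with $p\epsilon < 1$, yields the pointwise estimate $I_{p,k} \leq c_p(\epsilon)\,\Lambda_{k-1}^{1-p\epsilon}$ with $\Lambda_{k-1} := R_{k-1} - L_{k-1}$; and a one-line Markov-chain computation (using that $\Lambda_k / \Lambda_{k-1}$ equals $V$ or $1-V$ for $V$ uniform$(0,1)$) shows $\EE \Lambda_k^q \leq \rho_q^k$ for some $\rho_q < 1$, for every $q > 0$. Since $\epsilon < 1/4$ makes both $1-4\epsilon > 0$ and $2-4\epsilon > 0$, both $\EE I_{4,k}$ and $\EE I_{2,k}^2$ decay geometrically in $k$. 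Minkowski in $k$ then produces $\|A_n\|_4 = O(n^{-1/2})$, so $\PP(|A_n|>\varepsilon) = O(n^{-2})$ by Markov's inequality, and Borel--Cantelli delivers $A_n \to 0$ a.s.

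The main obstacle is the $A_n$ estimate. The threshold $\epsilon < 1/4$ arises from two simultaneous requirements: $p > 2$ (so $\sum_n n^{-p/2}$ is summable, enabling Borel--Cantelli) and $p\epsilon < 1$ (so $\EE I_{p,k}$ is finite with geometric decay in $k$); the integer choice $p = 4$ yields exactly the stated threshold.
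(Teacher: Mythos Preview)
Your proof is correct and follows essentially the same route as the paper's. Your decomposition $A_n - B_n - D_n$ coincides with the paper's two-piece split $\bigl(\SV/n - \St_n/n\bigr) + \bigl(\St_n/n - S\bigr)$ with $\St_n := \sum_k (n-\tau_k)^+ I_k$: your $A_n$ is exactly the first piece, and your $-(B_n + D_n)$ is the second, which the paper handles in one stroke via monotone convergence while you treat the two summands separately; for the hard piece $A_n$ the paper carries out the explicit fourth-moment expansion of an i.i.d.\ sum (yielding the bound $3n^{-2}\,\EE\,M_4(k)$) where you invoke Rosenthal's inequality, and both then use the tameness estimate $I_{p,k} \le c_p(\epsilon)\,(R_{k-1}-L_{k-1})^{1-p\epsilon}$ together with geometric decay of $\EE(R_{k-1}-L_{k-1})^q$ (the paper's \refL{boundForEdiffp}) to conclude via Borel--Cantelli.
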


Before proving this theorem, we establish three lemmas bounding various quantities of interest.

\begin{lemma}\label{boundForEdiffp}
For any $p > 0$ and $k\geq 1$, we have
\beqnn
\EE (R_{k}-L_{k})^p \leq \left(\frac{2-2^{-p}}{p+1}\right)^k.
\eeqnn
\end{lemma}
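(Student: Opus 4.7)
The plan is to proceed by induction on $k$, showing the stronger pointwise bound
\[
\EE\!\left[(R_k - L_k)^p \,\middle|\, L_{k-1}, R_{k-1}\right] \leq \frac{2 - 2^{-p}}{p+1}\,(R_{k-1} - L_{k-1})^p
\]
almost surely, and then iterating (using $R_0 - L_0 = 1$) to obtain the stated global bound.

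To establish the one-step inequality, I would condition on the pair $(L_{k-1}, R_{k-1})$ (with $\alpha$ inside the interval), abbreviate $L := R_{k-1} - L_{k-1}$, and set $t := (\alpha - L_{k-1})/L \in (0,1)$. As noted in \refS{SS:QValpreliminaries}, the pivot seed $U_{\tau_k}$ is conditionally uniform on $(L_{k-1}, R_{k-1})$, so writing $U_{\tau_k} = L_{k-1} + WL$ with $W \sim \mbox{unif}(0,1)$, the recursions~\eqref{lowerBound}--\eqref{upperBound} give $R_k - L_k = L(1-W)$ on the event $\{W < t\}$ and $R_k - L_k = LW$ on $\{W > t\}$. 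A direct computation then yields
\[
\EE\!\left[(R_k - L_k)^p \,\middle|\, L_{k-1}, R_{k-1}\right] = \frac{L^p}{p+1}\bigl[\,2 - t^{p+1} - (1-t)^{p+1}\,\bigr].
\]

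The remaining step is to bound the bracketed quantity by $2 - 2^{-p}$, uniformly in $t \in (0,1)$. This reduces to the lower bound $t^{p+1} + (1-t)^{p+1} \geq 2^{-p}$, which follows immediately from convexity of $x \mapsto x^{p+1}$ on $[0,1]$ (valid since $p \geq 0$ implies $p+1 \geq 1$): by Jensen applied to the two-point average at $t$ and $1-t$, the minimum over $t$ is attained at $t = 1/2$ and equals $2\cdot(1/2)^{p+1} = 2^{-p}$.

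Combining these observations yields the one-step bound, after which the tower property gives
\[
\EE (R_k - L_k)^p \leq \frac{2 - 2^{-p}}{p+1}\,\EE (R_{k-1} - L_{k-1})^p,
\]
and $k$-fold iteration starting from $R_0 - L_0 = 1$ produces the claimed estimate. I do not anticipate any serious obstacle; the only point requiring a moment of care is verifying the convexity bound in the correct direction (we want a lower bound on $t^{p+1}+(1-t)^{p+1}$, which gives the desired upper bound on the bracketed expression).
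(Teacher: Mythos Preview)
Your proof is correct and follows the same overall structure as the paper's: reduce to the one-step conditional bound and iterate using $R_0-L_0=1$. The only difference lies in how the one-step bound is established. You compute the conditional expectation \emph{exactly} as a function of the location parameter $t=(\alpha-L_{k-1})/(R_{k-1}-L_{k-1})$ and then maximize over~$t$ via the convexity inequality $t^{p+1}+(1-t)^{p+1}\ge 2^{-p}$. The paper instead observes directly the stochastic domination
\[
R_k-L_k \;\leq_{\mathrm{st}}\; \max\{U_{\tau_k}-L_{k-1},\,R_{k-1}-U_{\tau_k}\},
\]
whose right-hand side has a law independent of~$\alpha$, and then computes the $p$th moment of that maximum exactly, obtaining $\frac{2-2^{-p}}{p+1}(R_{k-1}-L_{k-1})^p$. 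The two routes are equivalent---the paper's dominating variable is precisely your $t=\tfrac12$ case, and your convexity argument shows that $t=\tfrac12$ is indeed the worst case---so the difference is one of presentation rather than substance. (One small cosmetic point: for $\alpha\in\{0,1\}$ one has $t\in\{0,1\}$ rather than $t\in(0,1)$, but your formula and convexity bound remain valid at the endpoints.)
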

\noindent Here note that for all $p > 0$ we have
\begin{equation}
\label{commonratio}
0 < \frac{2-2^{-p}}{p+1} < 1.
\end{equation}

\begin{proof}
Fix $p > 0$ and $k \geq 1$.  Since $R_0 - L_0 = 1$, it is sufficient to prove that
\beqnn
\EE [(R_{k}-L_{k})^p | L_{k-1}, R_{k-1}] \leq \frac{2-2^{-p}}{p+1}(R_{k-1}-L_{k-1})^p.
\eeqnn
Condition on $(L_{k - 1}, R_{k - 1})$; then with~$U$ uniformly distributed over
$(L_{k - 1}, R_{k - 1})$ we have the stochastic inequality
\beqnn
R_k - L_k \leq_{\mbox{\scriptsize st}} \max\{U - L_{k - 1}, R_{k - 1} - U\}.
\eeqnn
Thus for $L_{k - 1} \neq R_{k - 1}$, with
$$
A_{k - 1} := (L_{k - 1} + R_{k - 1}) / 2,
$$
we have
\begin{eqnarray*}
\lefteqn{\EE [(R_k -L_k)^p\,|\,L_{k-1}, R_{k-1}]} \\
&\leq& \EE [(\max\{U - L_{k - 1}, R_{k - 1} - U\})^p\,|\,L_{k - 1}, R_{k - 1}] \\
&=& (R_{k - 1} - L_{k - 1})^{-1} \left[ \int_{L_{k - 1}}^{A_{k - 1}}\!(R_{k - 1} - u)^p\,du + \int_{A_{k - 1}}^{R_{k - 1}}\!(u - L_{k - 1})^p \right]\,du \\
&=& \frac{2-2^{-p}}{p+1}(R_{k-1}-L_{k-1})^p,
\end{eqnarray*}
as desired.
\end{proof}

\begl
\label{L:intbound}
Suppose that the cost~$\beta$ is tame with parameters~$\epsilon$ and~$c$.  Then for any interval
$(a, b) \subseteq (0, 1)$, any $t \in (a, b)$, and any $0 \leq q < 1 / \epsilon$,  we have
$$
\int_a^b\!\beta^q(u, t)\,du  \leq  \frac {2^{q \epsilon} c^q} {1- q \epsilon} (b - a)^{1-q \epsilon}.
$$
\enl

\begin{proof}
Using the tameness assumption, integration immediately gives
$$
\int_a^b\!\beta^q(u, t)\,du  \leq  \frac {c^q} {1- q \epsilon} \left[ (t - a)^{1-q \epsilon}
+ (b - t)^{1-q \epsilon} \right].
$$
The lemma now follows from the concavity of $x^{1-q \epsilon}$ for $x > 0$.
\end{proof}

The next lemma is a simple consequence of the preceding two.

\begl
\label{L:Imomentbound}
Suppose that the cost~$\beta$ is tame with parameters $\epsilon < 1$ 
and~$c$.
Then for any $k \geq 1$ and any  $q > 0$, we have
$$
\EE\, I_k ^q \leq \left( \frac {2^{ \epsilon} c} {1-  \epsilon} \right)^q \left( \frac{2 - 2^{ - q (1-\epsilon)}} {q (1 - \epsilon) + 1} \right)^{k - 1},
$$
and so $\sum_k \EE\,I^q_k < \infty$ geometrically quickly.
\enl

\begin{proof} Recalling
$$
I_k = \int_{L_{k-1}}^{R_{k-1}}\!\beta(u,U_{\tau_k})\,du,
$$
we find from \refL{L:intbound} that
$$
I_k \leq \left( \frac{2^{\epsilon} c}{1-\epsilon} \right)  (R_{k-1}-L_{k-1})^{1 - \epsilon}.
$$
By application of \refL{boundForEdiffp} we thus obtain the desired bound on $\EE\,I^q_k$.  The series-convergence assertion follows from the observation~\eqref{commonratio}.
\end{proof}

Now we prove Theorem~\ref{thm:ASConvergence}.
\begin{proof}[Proof of Theorem~\ref{thm:ASConvergence}]
Clearly
it suffices to show that
\begin{equation}
\frac{\SV}{n} - \frac{\St_n}{n}\ \asto\ 0\label{QValASconvergenceP1}
\end{equation}
and
\begin{equation}
\frac{\St_n}{n} - S\ \asto\ 0\label{QValASconvergenceP2},
\end{equation}
where
$$
\St_n := \sum_{k\geq 1}(n-\tau_k)^{+} I_k.
$$
We tackle~\eqref{QValASconvergenceP2} first and then~\eqref{QValASconvergenceP1}.

By the monotone convergence theorem, $\St_n / n \uparrow S$ almost surely.  But
from \refL{L:Imomentbound} (using only $\epsilon < 1$) we have $\EE\,S = \sum_{k \geq 1} \EE\, I_k < \infty$, which implies that $S < \infty$ almost surely.  Hence~\eqref{QValASconvergenceP2} follows.

Our proof of~\eqref{QValASconvergenceP1} both is inspired by and follows along the same lines as the ``fourth-moment proof'' of the strong law of large numbers described in
Ross~\cite[Chapter~8]{r2002}; as in that proof, we prefer easy calculations involving fourth moments to more difficult ones involving tail probabilities---perhaps with the expense that the value $1 / 4$ in the statement of \refT{thm:ASConvergence} could be raised by more sophisticated arguments.
For~\eqref{QValASconvergenceP1} it suffices to show that, for any $\delta > 0$,
$$
\PP\left( \left|\frac{\SV}{n} - \frac{\St_n}{n} \right| > \delta \mbox{\ i.o.} \right) = 0,
$$
for which it is sufficient by the first Borel--Cantelli lemma and Markov's inequality to show that
\beqn
\sum_{n\geq 1}\EE \left(\frac{\SV}{n} - \frac{\St_n}{n} \right)^4 < \infty.\label{showThisFor4thM}
\eeqn
Here, by the triangle inequality for the $L^4$ norm,
\beqn
\sum_{n\geq 1}\EE \left(\frac{\SV}{n} - \frac{\St_n}{n} \right)^4
&\leq& \sum_{n\geq 1} \left[\sum_{k\geq 1} \left\| \frac{S_{n,k}}{n}-
\frac{(n-\tau_k)^+}{n}I_k\right\|_4\right]^4\nonumber\\
&=& \sum_{n\geq 1} \left[\sum_{k\geq 1} \left\| \frac{(n-\tau_k)^+}{n}\left(\frac{S_{n,k}}{n-\tau_k}-I_k\right)\right\|_4\right]^4,\label{boundFor4thM}
\eeqn
where we again use the convention $0 / 0 = 0$ for $S_{n, k} / (n - \tau_k)$ when $n = \tau_k$. As in the proof of Theorem~\ref{T:convergenceInLp}, we let $C_k$ denote the quadruple $(L_{k-1}, R_{k-1}, \tau_k, U_{\tau_k})$.  Also we define
\beqnn
\tilde{I}_k := {\bf 1}(L_{k-1}<U<R_{k-1})\beta(U, U_{\tau_k}).
\eeqnn
and
\beqnn
M_m(k):=\EE [(\tilde{I}_k-I_k)^m |C_k],
\eeqnn
where $U$ is unif$(0, 1)$ and independent of $C_k$.  Then routine calculation (see
Ross~\cite[Section~8.4]{r2002}) shows that
\beqn
& &\EE\left[\frac{(n-\tau_k)^+}{n}\left(\frac{S_{n,k}}{n-\tau_k}-I_k\right)\right]^4 = \EE\left[\EE \left[\left.\left\{\frac{(n-\tau_k)^+}{n}\left(\frac{S_{n,k}}{n-\tau_k}-I_k\right)\right\}^4 \right| C_k\right]\right]\nonumber\\
& & {} = \EE\left\{\left[\frac{(n-\tau_k)^+}{n}\right]^4 \left[\frac{(n-\tau_k)^+ M_4(k) + 3 (n-\tau_k)^+ (n-\tau_k-1)^+ M_2^2(k)}{[(n-\tau_k)^+]^4}\right]\right\}\nonumber\\
& & {} \leq \EE\left\{n^{-4} \left[n M_4(k) + 3n(n-1) M_4(k)\right]\right\} \leq 3 n^{-2}\, \EE\,M_4(k),\nonumber\\ \label{forBoundFor4thM}
\eeqn
where the first inequality holds because $M_4(k) \geq M_2^2(k)$.

We will show that $\EE\,M_4(k)$ decays geometrically and then use that fact to
prove~\eqref{showThisFor4thM}.  Since $(a-b)^4 \leq 8(a^4 + b^4)$ for any real $a$ and $b$, we have
\beqn
M_4(k) &\leq& 8 \left( \EE [\tilde{I}_k^4|C_k] + I_k^4 \right).\label{EM4}
\eeqn
First, using \refL{L:Imomentbound} we find (using only $\epsilon < 1$) that $\EE\,I^4_k < \infty$ decays geometrically:
\beqn
\EE\, I_k ^4 \leq \left( \frac {2^{ \epsilon} c} {1-  \epsilon} \right)^4 \left( \frac{2 - 2^{ - 4 (1-\epsilon)}} {5 - 4 \epsilon} \right)^{k - 1}.
\label{EM4part1}
\eeqn
Now we analyze, in similar fashion, $\EE[\tilde{I}_k^4|C_k]$ in \eqref{EM4}.
Using the assumption $0 < \epsilon < 1 / 4$ and \refL{L:intbound} we find
$$
\EE \left[ \left. \tilde{I}_k^4\,\right|\,C_k \right] \leq \frac{2^{4 \epsilon} c^4}{1 - 4 \epsilon} (R_{k - 1} - L_{k - 1})^{1 - 4 \epsilon}.
$$
Applying Lemma~\ref{boundForEdiffp} thus gives the geometric decay
\beqn
\EE\,\tilde{I}_k^4 \leq \frac{2^{4\epsilon} c^4}{1-4\epsilon} \left(\frac{2-2^{-(1-4\epsilon)}}{2-4\epsilon}\right)^{k-1}.
\label{EM4part2}
\eeqn
Therefore, it follows from \eqref{boundFor4thM}--\eqref{forBoundFor4thM} and \eqref{EM4part1}--\eqref{EM4part2} that~\eqref{showThisFor4thM} holds:
\beqnn
\sum_{n\geq 1}\EE \left(\frac{\SV}{n} - \frac{\St_n}{n} \right)^4 \leq 3 \left( \sum_{n \geq 1} n^{-2} \right) \left[ \sum_{k \geq 1} \left( \EE\,M_4(k) \right)^{1/4} \right]^4 < \infty.
\eeqnn
This completes the proof of Theorem~\ref{thm:ASConvergence}.
\end{proof}

\subsection{Computation of $\EE\,S$:\ an integral expression}\label{SS:QValIntegralExpression}

In this section we derive the following simple double-integral expression for $\EE\,S$ in terms of the
cost function~$\beta$.
\begin{theorem}\label{thm:integralExpressionForES}
For any symmetric cost function $\beta \geq 0$ we have
\begin{eqnarray*}
\EE\,S = 2 \int\!\int_{0 < u < t < 1}\!\beta(u, t)\,[(\alpha \vee t)-(\alpha \wedge u)]^{-1}\,du\,dt.
\end{eqnarray*}
\end{theorem}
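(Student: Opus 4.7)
The plan is to compute $\EE\,S = \sum_{k \geq 1} \EE\,I_k$ by first conditioning on the containing interval $(L_{k-1}, R_{k-1})$, then applying Fubini to swap the sum over $k$ with the double integral over $(u, t)$, and finally identifying the resulting weight against $\beta$ via a probabilistic telescoping argument. Since $U_{\tau_k}$ is conditionally uniform on $(L_{k-1}, R_{k-1})$ given that pair (the fact already exploited in \refS{SS:QValpreliminaries}), the definition~\eqref{I_k} gives
$$\EE[I_k \mid L_{k-1}, R_{k-1}] = \frac{1}{R_{k-1}-L_{k-1}} \int_{L_{k-1}}^{R_{k-1}}\!\!\int_{L_{k-1}}^{R_{k-1}}\!\beta(u,t)\,du\,dt.$$
Summing in $k$ and using nonnegativity of $\beta$ (for Fubini) and symmetry of $\beta$ (to restrict to $u<t$ at the cost of a factor $2$), I reduce \refT{thm:integralExpressionForES} to the identity $\Psi(u,t) = 1/[(\alpha\vee t)-(\alpha\wedge u)]$ for $0<u<t<1$, where
$$\Psi(u,t) := \sum_{k\geq 1}\EE\!\left[\frac{\mathbf{1}(L_{k-1}<u<t<R_{k-1})}{R_{k-1}-L_{k-1}}\right].$$

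To prove this identity, fix $0<u<t<1$, set $J := (\alpha\wedge u,\, \alpha\vee t)$ of length $|J|=(\alpha\vee t)-(\alpha\wedge u)$, and let $T:=\min\{k\geq 1 : U_{\tau_k}\in J\}$. A short case analysis over the three possible positions of $[u,t]$ relative to $\alpha$, using the update rules \eqref{lowerBound}--\eqref{upperBound} (which make $L_k$ the running maximum of pivots $<\alpha$ and $R_k$ the running minimum of pivots $>\alpha$), yields the almost sure identification
$$\{L_{k-1}<u<t<R_{k-1}\} \;=\; \{U_{\tau_j}\notin J \text{ for all } 1\leq j<k\} \;=\; \{T\geq k\}.$$
Since $\{T\geq k\}$ depends only on the first $k-1$ pivots, conditionally on those pivots the next pivot $U_{\tau_k}$ is uniform on $(L_{k-1}, R_{k-1})$, and on $\{T \geq k\}$ one has $(L_{k-1}, R_{k-1}) \supset J$; hence
$$\PP\!\left(T=k \,\big|\, U_{\tau_1}, \dots, U_{\tau_{k-1}}\right) \;=\; \mathbf{1}(T\geq k)\,\frac{|J|}{R_{k-1}-L_{k-1}}.$$
Taking expectations and summing on $k$ telescopes to
$$1 \;=\; \PP(T<\infty) \;=\; \sum_{k\geq 1}\PP(T=k) \;=\; |J|\,\Psi(u,t),$$
where $\PP(T<\infty)=1$ because $R_k-L_k \to 0$ almost surely, a consequence of \refL{boundForEdiffp} applied with any $p>0$. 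Thus $\Psi(u,t)=1/|J|$, as required.

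The only delicate step is the case-by-case identification of $\{L_{k-1}<u<t<R_{k-1}\}$ with $\{T\geq k\}$, which requires checking the cases $t\leq\alpha$, $u\leq\alpha\leq t$, and $\alpha\leq u$ separately and observing that each of the constraints $L_{k-1} < u$ and $t < R_{k-1}$ becomes either automatic or equivalent (modulo a null set) to the absence of an earlier pivot in the relevant half of $J$. Once this bookkeeping is in hand, the remainder is a one-line telescoping argument in which $\beta$ plays no role, which is precisely why the final integrand factors so cleanly as $\beta(u,t)$ times the geometric weight $[(\alpha\vee t)-(\alpha\wedge u)]^{-1}$.
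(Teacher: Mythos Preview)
Your argument is correct and genuinely different from the paper's. Both proofs begin identically---condition on $(L_{k-1},R_{k-1})$, use the conditional uniformity of $U_{\tau_k}$, and swap the sum over~$k$ with the double integral---thereby reducing the theorem to evaluating
\[
\Psi(u,t)=\sum_{k\ge 1}\EE\!\left[\frac{{\bf 1}(L_{k-1}<u<t<R_{k-1})}{R_{k-1}-L_{k-1}}\right].
\]
The paper evaluates this by first deriving an explicit formula for the occupation measure $\nu(dx,dy)=\sum_{k\ge 0}\PP(L_k\in dx,R_k\in dy)$ (Proposition~\ref{P:nuformula}), whose proof occupies the entire Appendix and proceeds through recursive density formulas, a generating-function integral equation, and a chain of eight lemmas; one then integrates $(y-x)^{-1}{\bf 1}(x<u<t<y)$ against the resulting four-piece expression for~$\nu$. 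You bypass all of that: your stopping-time $T=\min\{k:U_{\tau_k}\in J\}$ together with the identification $\{L_{k-1}<u<t<R_{k-1}\}=\{T\ge k\}$ converts the sum directly into $|J|^{-1}\PP(T<\infty)=|J|^{-1}$. The case analysis and the appeal to \refL{boundForEdiffp} for $R_k-L_k\to 0$ are both sound. Your route is considerably shorter and more transparent for this particular theorem; what the paper's route buys is the explicit form of~$\nu$ itself, which is a result of independent interest even though in this paper it is used only here.
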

\begin{proof}
Recall that $\EE\,S = \sum_{k \geq 1} \EE\,I_k$, where
$$
I_k = \int_{L_{k-1}}^{R_{k-1}}\!\beta(u, U_{\tau_k})\,du.
$$
Recall also that, for each $k$, the conditional distribution of $U_{\tau_k}$ given $L_{k-1}$ and $R_{k-1}$ is uniform($L_{k-1},R_{k-1}$).  Thus
\begin{eqnarray*}
\EE\,I_k &=& \EE\,\int_{L_{k-1}}^{R_{k-1}}\!(R_{k-1}-L_{k-1})^{-1} \int_{L_{k-1}}^{R_{k-1}}\!\beta(u,w)\,dw\,du\\
&=&\int_{0<w,u<1}\!\beta(w,u)\,\EE[(R_{k-1}-L_{k-1})^{-1} {\bf 1}(L_{k-1} < u, w < R_{k-1})]\,dw\,du\\
&=&2 \int_{0<w<u<1}\!\beta(w,u) \\
& &\hspace{.02cm} {} \times \int_{0\leq x < \alpha < y \leq 1} (y-x)^{-1} {\bf 1}(x < w < u < y) \PP(L_{k-1} \in dx,R_{k-1} \in dy)\,dw\,du.
\end{eqnarray*}
Hence
\begin{eqnarray}\label{intermediateIntegralES}
\EE\,S &=& 2\int_{0<w<u<1}\!\beta(w,u) \\
& &\  {} \times \int_{0\leq x < \alpha < y \leq 1}(y-x)^{-1} {\bf 1}(x < w < u < y)\,\nu(dx, dy)\,dw\,du\nonumber
\end{eqnarray}
where~$\nu$ is the measure
\begin{equation}
\label{nu}
\nu(dx, dy) := \sum_{k \geq 0}\PP(L_k \in dx,R_k \in dy).
\end{equation}

As established in the Appendix in \refP{P:nuformula}, one has the tractable expression
$$
\nu(dx, dy) = \delta_0(dx)\,\delta_1(dy)+ (1-x)^{-1}\,dx\,\delta_1(dy)
+ \delta_0(dx)\,y^{-1}\,dy + 2(y-x)^{-2}dx\,dy.
$$
Using this last expression, routine calculation shows that, for $0<w<u<1$,
\begin{equation}
\int_{0\leq x < \alpha < y \leq 1}(y-x)^{-1} {\bf 1}(x < w < u < y)\,\nu(dx,dy)=[(\alpha \vee u) - (\alpha \wedge w)]^{-1}.\label{partOfIntegralES}
\end{equation}
Substitute~\eqref{partOfIntegralES} into~\eqref{intermediateIntegralES} to complete the proof of the theorem.
\end{proof}

\begr
\label{rmk:finiteESforKeys}
We now let $\beta \equiv 1$ and use
\refT{thm:integralExpressionForES} to analyze the expectation of the number $K_n$ of key comparisons required by {\tt QuickVal}$(n, \alpha)$.  Then the expected value in \refT{thm:integralExpressionForES}
is
\begin{equation}
2 \int\!\int_{0 < u < t < 1}\![(\alpha \vee t)-(\alpha \wedge u)]^{-1}\,du\,dt =2[1-\alpha \ln \alpha - (1-\alpha) \ln (1-\alpha)] < \infty.\label{ESforKeyComparisons}
\end{equation}
It follows by \eqref{ESforKeyComparisons} that for $\alpha = 0$ we have
\beqnn
\lim_{n\rightarrow \infty} \EE\,K_n/n = 2,
\eeqnn
which is well known since $K_n$ in this case represents the number of key comparisons requred by {\tt QuickMin} applied to a file of $n$ keys (e.g., Mahmoud \textit{et al.}~\cite{mms1995}).  Thus we are now able to conclude that for any $\alpha$ ($0 \leq \alpha \leq 1$),  $\EE\, K_n / n$ converges to
the simple constant in~\eqref{ESforKeyComparisons}.
Also notice that we have verified the hypothesis of \refT{T:convergenceInLp} for $p=1$ (see also \ref{rmk:L1}) by \eqref{ESforKeyComparisons}, as we promised in \refR{rmk:keys} that we would.
\enr

\subsection{Computation of $\EE\,S$:\ a series expression}\label{SS:QValSeriesExpression}

We now restrict to the cost function $\beta_{\mathrm{symb}}$ and use \refT{thm:integralExpressionForES} to derive a series expression for $\EE\,S$.  In the notation of \refS{SS:source}, we have
\beqnn
\sfrac{1}{2}\,\EE\,S
 &=& \sum_{w \in \Sigma^*} \int_{\mathcal{T}_w} [(\alpha \vee t) - (\alpha \wedge u)]^{-1}\,du\,dt,
\eeqnn
which is easily obtained by noting that for $u < t$ we have
\begin{equation}
\label{betaexpression}
\beta(u, t) = \sum_{w\in \Sigma^*} {\bf 1}(a_w < u < t < b_w).
\end{equation}

Define
\beqnn
\mathcal{J}(w):=\int_{\mathcal{T}_w}\![(\alpha \vee t) - (\alpha \wedge u)]^{-1}\,du\,dt.
\eeqnn
Then routine calculation shows that
$$
\mathcal{J}(w) = p_w L\left( \left| \frac{\alpha - \mu_w}{p_w} \right| \right),
$$
where the reader should recall the definition of~$L$ near the end of \refS{SS:known}.
Thus
\begin{equation}
\label{ESseriesExpression}
\EE\,S = \sum_{w \in \Sigma^*} p_w L\left( \left| \frac{\alpha - \mu_w}{p_w} \right| \right).
\end{equation}
This last equation is in agreement with Theorem 2(i) of Vall\'{e}e \textit{et al.}~\cite{vcff2009} (see also their Figure~1).  But, unlike in~\cite{vcff2009}, our calculation requires \emph{no assumptions} of tameness, nor even that $\EE\,S$ is finite.

\section{Analysis of QuickQuant}\label{S:QQ}
Following some preliminaries in \refS{SS:QQpreliminaries}, in
Section~\ref{SS:QQp} we show that a suitably defined $\SQ / n$ converges in $L^p$ to $S$ for $1 \leq p < \infty$ provided that the cost function $\beta$ is $\epsilon$-tame with $\epsilon < 1 / p$; hence $\SQ / n$ and $\SV / n$ have the same limiting distribution provided only that the cost function~$\beta$ is 
$\epsilon$-tame for suitably small~$\epsilon$.  Granting that result for a moment, we can now relate three of the results obtained in \refS{S:QuickVal} to previously known results reviewed in \refS{SS:known}. From Remark~\ref{rmk:keys} we recover the result of \cite[Theorem~8]{gr1996} (in a cosmetically different, but equivalent, form; compare \cite[Theorem~3]{g1998}) for the limiting distribution of the number of key comparisons, and from Remark~\ref{rmk:finiteESforKeys} we recover first-moment information for the same.  Finally,
recalling that $L^1$-convergence implies convergence of means,
from~\eqref{ESseriesExpression} we recover at least the lead-order term in the asymptotics of~\cite{vcff2009} discussed at~\eqref{rhodef}.
\subsection{Preliminaries}\label{SS:QQpreliminaries}

We will closely follow the framework described in Section~\ref{S:QuickVal} for the analysis of {\tt QuickVal} and construct a random variable, call it $\SQ$, that has the distribution of the total cost required by {\tt QuickQuant} when applied to a file of $n$ keys.  Our goal is to show that, under suitable technical conditions, $\SQ / n$ converges in $L^p$ to $S$ defined at \eqref{S}.

Again, we define $\SQ$ in terms of an infinite sequence $(U_i)_{i\geq 1}$ of seeds that are
i.i.d.\ uniform$(0, 1)$.
Let $\rank$ (with $m_n / n \to \alpha$) denote our target rank for {\tt QuickQuant}.
Let $\tau_k(n)$ denote the index of the seed that corresponds to the $k$th pivot.   As in Section~\ref{SS:QValpreliminaries} we will set the first pivot index $\tau_1(n)$ to~$1$ rather than to a randomly chosen integer from $\{1, \dots, n\}$.  For $k \geq 1$, we will use $\LQkm$ and
$\RQkm$, as defined below, to denote the lower and upper bounds, respectively, of seeds of words that are eligible to be compared with the $k$th pivot.  [Notice that $\tau_k(n)$, $\LQk$, and $\RQk$ are analogous to $\tau_k$, $L_k$, and $R_k$ defined in Section~\ref{SS:QValpreliminaries}; see \eqref{pivotIndex}--\eqref{upperBound}.]
Hence we let $L_0(n):=0$ and $R_0(n):=1$, and for $k \geq 1$ we inductively define
$$
\tau_k(n) := \inf\{i \leq n: \LQkm < U_i < \RQkm\},
$$
and
\begin{eqnarray*}
\LQk &:=& {\bf 1}(\pivrank \leq \rank)\,\pSeedk + {\bf 1}(\pivrank > \rank)\,\LQkm, \\
\RQk &:=& {\bf 1}(\pivrank \geq \rank)\,\pSeedk + {\bf 1}(\pivrank < \rank)\,\RQkm
\end{eqnarray*}
if $\tau_k(n) < \infty$ but
$$
(\LQk, \RQk) := (\LQkm, \RQkm)
$$
if $\tau_k(n) = \infty$.
Here $\pivrank$ denotes the rank of the $k$th pivot seed $U_{\tau_k(n)}$ if $\tau_k(n) < \infty$ and $\rank$ otherwise.  Recall that the infimum of the empty set is~$\infty$; hence
$\tau_k(n)=\infty$ if and only if $\LQkm = \RQkm$.

Using this notation, let
\begin{eqnarray*}
\SQnk := \sum_{i:\,\tau_k(n)<i\leq n} {\bf 1}(\LQkm < U_i < \RQkm) \beta(U_i, \pSeedk)
\end{eqnarray*}
be the total cost of all comparisons (for the first~$n$ keys) with the $k$th pivot key.
Then
\begin{eqnarray}\label{SnQ}
\SQ := \sum_{k \geq 1} \SQnk
\end{eqnarray}
has the distribution of the total cost required by {\tt QuickQuant}.

Notice that the expression \eqref{SnQ} is analogous to \eqref{SV}.  In fact, we will prove the $L^p$-convergence of $\SQ / n$ to $S$ by comparing the corresponding expressions for {\tt QuickVal} and {\tt QuickQuant}.

\subsection{Convergence of $\SQ / n$ in $L^p$ for $1 \leq p < \infty$}\label{SS:QQp}

The following is our main theorem regarding {\tt QuickQuant}.

\begin{theorem}\label{thm:convergenceOfSQp}
Let $1 \leq p < \infty$.  Suppose that the cost function~$\beta$ is $\epsilon$-tame with $\epsilon < 1 / p$.
Then $\SQ / n$ converges in $L^p$ to $S$.
\end{theorem}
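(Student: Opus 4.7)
The plan is to couple $\SQ$ and $\SV$ on a common probability space by feeding both algorithms the same i.i.d.\ uniform seed sequence $(U_i)_{i \geq 1}$, and then to reduce the claim to the $L^p$-convergence $\SV/n \Lpto S$ already established in \refT{T:convergenceInLp}. Under $\epsilon$-tameness with $\epsilon < 1/p$, \refL{L:Imomentbound} (with $q = p$) yields $\sum_{k \geq 1} (\EE\,I_{p,k})^{1/p} < \infty$, so the hypothesis~\eqref{techp} of \refT{T:convergenceInLp} holds and $\SV/n \Lpto S$. The task reduces to showing $\|(\SQ - \SV)/n\|_p \to 0$.

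Next I would introduce a fixed truncation $\SV^{(K)} := \sum_{k \leq K} S_{n,k}$ and $\SQ^{(K)} := \sum_{k \leq K} \SQnk$. The tail estimate in the proof of \refT{T:convergenceInLp} bounds $\|(\SV - \SV^{(K)})/n\|_p$ uniformly in $n$ by $\sum_{k>K}(\EE\,I_{p,k})^{1/p}$, which is arbitrarily small for $K$ large. The same conditional $L^p$-triangle-inequality argument (conditioning on $(\LQkm, \RQkm, \tau_k(n), \pSeedk)$) gives the analogous QuickQuant bound with $I_{p,k}$ replaced by $I^Q_{p,k} := \int_{\LQkm}^{\RQkm} \beta^p(u, \pSeedk)\,du$. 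Here \refL{boundForEdiffp} applies verbatim to the QuickQuant intervals because, regardless of which side the algorithm recurses on, the new interval length is stochastically bounded by $\max\{U - \LQkm, \RQkm - U\}$ with $U$ uniform on $(\LQkm, \RQkm)$; together with \refL{L:intbound} this gives geometric decay of $\EE\,I^Q_{p,k}$ and hence $\|(\SQ - \SQ^{(K)})/n\|_p \to 0$ uniformly in $n$ as $K \to \infty$. Moreover, since $\epsilon < 1/p$ strictly, the same argument with a slightly larger $p' > p$ still satisfying $\epsilon < 1/p'$ yields uniform-in-$n$ bounds in $L^{p'}$, giving uniform integrability of $((\SV^{(K)}/n)^p)_n$ and $((\SQ^{(K)}/n)^p)_n$.

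The core of the argument is to show that, for each fixed $K$, $\|(\SV^{(K)} - \SQ^{(K)})/n\|_p \to 0$ as $n \to \infty$. On the event
$$
\mathcal{A}^{(n)}_K := \bigcap_{1 \leq k \leq K}\bigset{(\LQkm, \RQkm) = (L_{k-1}, R_{k-1})},
$$
both algorithms select the same ``first in the current interval'' pivot at every level up to $K$, so $\tau_k(n) = \tau_k$ and $\SQnk = S_{n,k}$ for all $k \leq K$, whence $\SV^{(K)} = \SQ^{(K)}$. I would establish $\PP(\mathcal{A}^{(n)}_K) \to 1$ by induction on the level: at level $k$, QuickQuant branches according to whether the pivot's rank in the current subset is above or below the current local target $m_k$, while QuickVal branches according to the pivot's seed relative to $\alpha$; by the law of large numbers applied to indicator sums, $m_k/n_k$ and $(\alpha - \LQkm)/(\RQkm - \LQkm)$ both concentrate around a common value, so the two decisions disagree only on an event of $o(1)$ probability. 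On $(\mathcal{A}^{(n)}_K)^c$ we have $|\SV^{(K)} - \SQ^{(K)}|/n \leq (\SV^{(K)} + \SQ^{(K)})/n$, and \Holder's inequality with the uniform $L^{p'}$ bounds from the previous paragraph together with $\PP((\mathcal{A}^{(n)}_K)^c) \to 0$ gives the desired vanishing. Combining the three ingredients via the triangle inequality completes the proof.

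The principal obstacle is the inductive probabilistic argument controlling $\PP(\mathcal{A}^{(n)}_K)$: one must track the coupled evolution of the population-based QuickVal interval and the rank-based QuickQuant interval and verify quantitatively (at rate roughly $O(n^{-1/2})$ per level) that their branching decisions agree at every level up to $K$ with probability tending to one. A secondary subtlety is that uniform integrability at exponent $p$ requires slightly more than the raw $L^p$ bound from step two; this is handled by the strict inequality $\epsilon < 1/p$, which leaves a little room to run step two at a larger exponent $p' > p$.
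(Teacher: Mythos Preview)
Your overall architecture---couple $\SQ$ with $\SV$ on the same seed sequence, truncate at level~$K$, control tails uniformly in~$n$, and show the truncated sums coincide on an event of probability tending to~$1$---is a workable alternative to what the paper does, but it is genuinely different, and two of your assessments of where the difficulty lies are inverted.

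\medskip

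\textbf{Comparison with the paper.} The paper never explicitly compares $\SQ$ to $\SV$. Instead it argues directly that for each fixed~$k$ one has $\SQnk/n \to I_k$ almost surely (because the QuickQuant pivot data $\tau_k(n),\pSeedk,\LQk,\RQk$ almost surely \emph{become equal} to their QuickVal counterparts for all large~$n$, and then the strong law applies), and then upgrades the termwise a.s.\ convergence to $L^p$-convergence of the series by introducing an auxiliary probability mass function $(a_k)$ on levels, working in the product space $\PP\times A$, and proving uniform integrability there via a bound on $\sum_k a_k^{1-q}\EE|\SQnk/n - I_k|^q$ for some $q\in(p,1/\epsilon)$. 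This single product-measure device replaces your truncation-plus-tails decomposition and your separate \Holder/uniform-integrability step on $(\mathcal A_K^{(n)})^c$.

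\medskip

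\textbf{What you overestimate.} Your ``principal obstacle''---showing $\PP(\mathcal A_K^{(n)})\to 1$---is actually immediate and needs no quantitative $O(n^{-1/2})$ control. Almost surely, for each fixed~$k$ the QuickQuant quantities $\tau_k(n),\LQk,\RQk,\pSeedk$ are \emph{eventually equal} to the QuickVal quantities $\tau_k,L_k,R_k,U_{\tau_k}$ (since the branching decisions agree once the empirical rank of each pivot lands on the correct side of the target, which happens for all large~$n$ because $U_{\tau_j}\neq\alpha$ a.s.). Hence $\PP(\mathcal A_K^{(n)})\to 1$ for every fixed~$K$, with no further work.

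\medskip

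\textbf{What you underestimate.} The step you pass over quickly---``the same conditional $L^p$-triangle-inequality argument (conditioning on $(\LQkm,\RQkm,\tau_k(n),\pSeedk)$) gives the analogous QuickQuant bound''---does \emph{not} transfer verbatim. In the QuickVal setting, $C_k=(L_{k-1},R_{k-1},\tau_k,U_{\tau_k})$ is measurable with respect to $U_1,\dots,U_{\tau_k}$, so the later seeds $U_i$, $i>\tau_k$, are conditionally i.i.d.\ $\unif(0,1)$ and the triangle inequality applies. In QuickQuant, by contrast, the interval $(\LQkm,\RQkm)$ is determined by \emph{rank-based} branching decisions that depend on \emph{all} of $U_1,\dots,U_n$; conditioning on your quadruple does not leave the remaining seeds i.i.d.\ uniform. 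The paper repairs this by conditioning additionally on $\nukm$ (the number of later seeds falling in the current interval): given $D_k(n)=(\LQkm,\RQkm,\tau_k(n),\pSeedk,\nukm)$, the $\nukm$ relevant seed values are i.i.d.\ $\unif(\LQkm,\RQkm)$, because the branching used only counts and not positions within subintervals. With this correct conditioning one obtains the bound~(4.7), $\EE|\SQnk/n|^q \leq \frac{2^{q\epsilon}c^q}{1-q\epsilon}\bigl(\frac{2-2^{-(1-q\epsilon)}}{2-q\epsilon}\bigr)^{k-1}$, which is exactly the uniform-in-$n$ geometric tail control your truncation argument needs. So your strategy can be completed, but this conditional-independence subtlety is the genuine technical point, not the coupling probability.
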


\begr
Note that as~$p$ increases, getting $L^p$-convergence requires the increasingly stronger condition $\epsilon < 1 / p$.  Thus we have convergence of moments of \emph{all} orders provided the source is $\gamma$-tame for \emph{every} 
$\gamma > 0$---for example, if it is g-tame as in \refR{R:g-tame}, as is true for memoryless and most Markov sources.
\enr

The proof of Theorem~\ref{thm:convergenceOfSQp} will make use of the following analogue of \refL{boundForEdiffp}, whose proof is essentially the same and therefore omitted.

\begin{lemma}\label{boundForEdiffpn}
For any $p > 0$ and $k \geq 1$ and $n \geq 1$, we have
\beqnn
\EE (R_k(n) -L_k(n))^p \leq \left(\frac{2 - 2^{-p}}{p + 1}\right)^k.
\eeqnn
\end{lemma}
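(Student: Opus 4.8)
The plan is to prove \refL{boundForEdiffpn} by following closely the argument already given for \refL{boundForEdiffp}, the only genuinely new feature being that in {\tt QuickQuant} the choice of recursive branch at each stage is governed by the pivot's rank among the residual seeds rather than simply by comparing the pivot seed to~$\alpha$. First I would reduce, via the tower property and induction on~$k$ with base case $R_0(n) - L_0(n) = 1$, to the one-step contraction estimate
\[
\EE\bigl[(R_k(n) - L_k(n))^p \mid L_{k-1}(n), R_{k-1}(n)\bigr] \leq \frac{2 - 2^{-p}}{p + 1}\,\bigl(R_{k-1}(n) - L_{k-1}(n)\bigr)^p.
\]

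The heart of the matter is a deterministic bound on the new interval length. If $\tau_k(n) = \infty$, then, recalling from the construction that this happens precisely when $L_{k-1}(n) = R_{k-1}(n)$, both sides of the displayed conditional inequality are~$0$. If $\tau_k(n) < \infty$, abbreviate $\ell := L_{k-1}(n)$, $r := R_{k-1}(n)$, and $v := U_{\tau_k(n)}$, which then lies in $(\ell, r)$; inspecting the three cases $\pivrank < \rank$, $\pivrank = \rank$, and $\pivrank > \rank$ in the definitions of $L_k(n)$ and $R_k(n)$ shows that $R_k(n) - L_k(n)$ equals $r - v$, $0$, or $v - \ell$ respectively, so in every case
\[
R_k(n) - L_k(n) \leq \max\{v - \ell,\; r - v\}.
\]
This is where the difference from {\tt QuickVal} appears and is immediately neutralized: although one cannot read off which branch is taken from $(\ell, r, v)$ alone --- it depends on how many of the remaining seeds fall below~$v$ --- the bound above is valid whichever branch is taken.

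It then remains to condition on $(L_{k-1}(n), R_{k-1}(n)) = (\ell, r)$ with $\ell < r$ and integrate. By the same elementary fact invoked in \refS{SS:QValpreliminaries} --- the first among the i.i.d.\ uniform seeds (restricted to indices $\leq n$) to fall into the interval $(\ell, r)$ is uniformly distributed on $(\ell, r)$ --- the pivot seed $U_{\tau_k(n)}$ is conditionally uniform on $(\ell, r)$. Writing $A_{k-1}(n) := (\ell + r)/2$ for the midpoint and using the deterministic bound, one gets
\[
\EE\bigl[(R_k(n) - L_k(n))^p \mid \ell, r\bigr] \leq \frac{1}{r - \ell}\Bigl[\int_\ell^{A_{k-1}(n)}\!(r - u)^p\,du + \int_{A_{k-1}(n)}^{r}\!(u - \ell)^p\,du\Bigr] = \frac{2 - 2^{-p}}{p + 1}\,(r - \ell)^p,
\]
the last equality being exactly the elementary integral computation from the proof of \refL{boundForEdiffp} (each antiderivative, split at the midpoint, contributes a factor $1 - 2^{-(p+1)}$). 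Taking expectations of both sides and iterating from $k$ down to $0$ yields $\EE(R_k(n) - L_k(n))^p \leq \bigl(\frac{2 - 2^{-p}}{p+1}\bigr)^k$, as claimed.

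I do not anticipate a real obstacle: the argument is essentially the one already written for \refL{boundForEdiffp}. The only point that needs a moment's care is the rank-dependent branching, disposed of by the observation that the interval-length bound $R_k(n) - L_k(n) \leq \max\{v - \ell, r - v\}$ holds regardless of the branch, together with the trivial bookkeeping for the degenerate stage $\tau_k(n) = \infty$.
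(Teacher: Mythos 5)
Your proof is correct and is essentially the argument the paper intends: the paper omits the proof of this lemma precisely because it is ``essentially the same'' as that of Lemma~\ref{boundForEdiffp}, and your write-up carries out that adaptation faithfully, including the two points that actually need checking (the branch-independent bound $R_k(n)-L_k(n)\leq\max\{v-\ell,\,r-v\}$ despite the rank-dependent branching, and the degenerate case $\tau_k(n)=\infty$).
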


\begin{proof}[Proof of Theorem~\ref{thm:convergenceOfSQp}]
Part of our strategy in proving this theorem is to compare {\tt QuickQuant} with {\tt QuickVal}.  Hence we will frequently refer to the notation established in Section~\ref{SS:QValpreliminaries} for the analysis of {\tt QuickVal}.
For each~$k$, observe that as $n \rightarrow \infty$ we have
\begin{eqnarray*}
\tau_k(n)\ \asto\ \tau_k,\ \ \pSeedk\ \asto\ U_{\tau_k},\ \ \LQk\ \asto\ L_k,\ \ \RQk\ \asto\ R_k,
\end{eqnarray*}
where $\tau_k$, $L_k$, and $R_k$, are defined in Section~\ref{SS:QValpreliminaries} [see \eqref{pivotIndex}--\eqref{upperBound}].  (In fact, in each of these four cases of convergence, the left-hand side almost surely becomes equal to its limit for all sufficiently large~$n$.)  Thus for each $k \geq 1$ we have
\begin{equation}
\label{diffvanp}
\SQnk- S_{n,k}\ \asto\ 0,
\end{equation}
where $S_{n,k}$ is defined at \eqref{Snk}; indeed, again the difference almost surely vanishes for all sufficiently large~$n$.
In proving
\refT{T:convergenceInLp}, we showed [at~\eqref{Lp}] that
\begin{eqnarray*}
\frac{S_{n,k}}{n}\ \Lpto\ I_k,
\end{eqnarray*}
where $I_k$ is defined at \eqref{I_k}, and it is somewhat easier (by means of conditional application of the strong law of large numbers, rather than the $L^p$ law of large numbers, together with Fubini's theorem) to show that
\begin{equation}
\label{strongp}
\frac{S_{n,k}}{n}\ \asto\ I_k.
\end{equation}
Combining~\eqref{diffvanp} and~\eqref{strongp}, for each $k \geq 1$ we have
\begin{eqnarray}
\frac{\SQnk}{n}\ \asto\ I_k.\label{ASconvergenceOfSQnkp}
\end{eqnarray}

What we want to show is that
\begin{eqnarray}
\frac{\SQ}{n} = \sum_{k \geq 1} \frac{\SQnk}{n}\ \Lpto\ \sum_{k \geq 1} I_k = S.\label{SQlimitp}
\end{eqnarray}
Choose any sequence $(a_k)_{k \geq 1}$  of positive numbers summing to~$1$, and let~$A$ be the probability measure on the positive integers with this probability mass function.  Then, once again using the fact that the $p$th power of the absolute value of an average is bounded by the average of $p$th powers of absolute values,
\begin{eqnarray*}
\left| \frac{\SQ}{n} - S \right|^p
&\leq& \left[ \sum_{k \geq 1} \left| \frac{\SQnk}{n} - I_k \right| \right]^p
= \left[ \sum_{k \geq 1} a_k a^{-1}_k \left| \frac{\SQnk}{n} - I_k \right| \right]^p \\
&\leq& \sum_{k \geq 1} a_k a^{-p}_k \left| \frac{\SQnk}{n} - I_k \right|^p.
\end{eqnarray*}
So for~\eqref{SQlimitp} it suffices to prove that, \emph{with respect to the product probability $\PP \times A$}, as $n \to \infty$ the sequence
$$
a^{-p}_k \left|\frac{\SQnk}{n}-I_k\right|^p
$$
converges in $L^1$ to~$0$.  What we know from~\eqref{ASconvergenceOfSQnkp} is that the sequence converges almost surely with respect to $\PP \times A$.

Now almost sure convergence together with boundedness in $L^{1 + \delta}$ are, for any $\delta > 0$, sufficient for convergence in $L^1$ because the boundedness condition implies uniform integrability (e.g.,\ Chung~\cite[Exercise 4.5.8]{c2001}).
Thus our proof is reduced to showing that, for some $q > p$, the sequence
\begin{eqnarray*}
\sum_{k\geq 1} a^{1 - q}_k\,\EE\, \left| \frac{\SQnk}{n} - I_k \right|^q
\end{eqnarray*}
is bounded in~$n$, for a suitably chosen probability mass function $(a_k)$.  Indeed, by convexity of $q$th power,
\begin{equation}
2^{1 - q} \sum_{k \geq 1} a^{1 - q}_k\,\EE\, \left|\frac{\SQnk}{n} - I_k\right|^q \leq \sum_{k \geq 1} a^{1 - q}_k\,\EE \left|\frac{\SQnk}{n}\right|^q
+ \sum_{k\geq 1} a^{1 - q}_k\,\EE\, I_k^q,\label{suffForSQlimitq}
\end{equation}
and we will show that each sum on the right-hand side of \eqref{suffForSQlimitq} is bounded in order to prove the theorem.  The value of~$q$ that we use can be any satisfying $\epsilon < 1 / q < 1 / p$.

First we recall from \refL{L:Imomentbound} that
\begin{equation}
\EE\, I_k ^q \leq \left( \frac {2^{ \epsilon} c} {1-  \epsilon} \right)^q \left( \frac{2 - 2^{ - q (1-\epsilon)}} {q (1 - \epsilon) + 1} \right)^{k - 1}, \qquad k \geq 1, \label{boundForEIkq}
\end{equation}
with geometric decay.  Thus the second sum on the right in~\eqref{suffForSQlimitq} is finite if the cost is $\epsilon$-tame with $\epsilon < 1$ and the sequence $(a_k)$ is suitably chosen not to decay too quickly.

Next we analyze $\EE|\SQnk / n|^q$ for the first sum on the right in~\eqref{suffForSQlimitq}.  Let
\begin{eqnarray*}
\nukm := |\{i: \LQkm < U_i < \RQkm,\ \tau_k(n) < i \leq n\}|.
\end{eqnarray*}
Until further notice our calculations are done only over the event $\{\nu_{k - 1}(n) > 0\}$.
Then, bounding the $q$th power of the absolute value of an average by the average of $q$th powers of absolute values,
\begin{eqnarray}
\left|\frac{\SQnk}{n}\right|^q &=& \left|\frac{1}{\nu_{k-1}(n)} \sum_{i:\,\LQkm < U_i < \RQkm} {\bf 1}(\tau_{k}(n) < i \leq n)\,\beta(U_i, \pSeedk)\right|^q\nonumber\\
& &\hspace{3cm}\times \left(\frac{\nukm}{n}\right)^q\nonumber\\
&\leq&\frac{1}{\nukm}\sum_{i:\,\LQkm < U_i < \RQkm} {\bf 1}(\tau_{k}(n) < i \leq n)\,\beta^q(U_i, \pSeedk)\label{boundForSQnkq}\\
& &\hspace{3cm}\times \left(\frac{\nukm}{n}\right)^q.\nonumber
\end{eqnarray}
Let $D_k(n)$ denote the quintuple $(\LQkm, \RQkm, \tau_{k}(n), \pSeedk, \nukm)$, and notice that, conditionally given $D_k(n)$, the $\nu_{k - 1}(n)$ values $U_i$ appearing
in~\eqref{boundForSQnkq} are i.i.d.\ unif$(L_{k - 1}(n), R_{k - 1}(n))$.
Using \eqref{boundForSQnkq}, we bound the conditional expectation of $|\SQnk / n|^q$  given $D_k(n)$.  We have
\begin{eqnarray}
\EE\left[\left.\left|\frac{\SQnk}{n}\right|^q \right| D_k(n)\right]
&\leq&[\RQkm - \LQkm]^{-1}\int_{\LQkm}^{\RQkm}\!\beta^q(u, \pSeedk)\,du\nonumber\\
& &\hspace{3cm}\times \left(\frac{\nukm}{n}\right)^q.\label{boundForCESQnkq}
\end{eqnarray}
Under $\epsilon$-tameness of~$\beta$ with $\epsilon < 1 / q$,
we find from \refL{L:intbound} that
\begin{eqnarray}
\int_{\LQkm}^{\RQkm}\!\beta^q(u, \pSeedk)\,du
\leq \frac{2^{q \epsilon} c^q}{1 - q \epsilon} [\RQkm - \LQkm]^{1-q \epsilon}.\label{boundForIntBetaq}
\end{eqnarray}
From \eqref{boundForCESQnkq}--\eqref{boundForIntBetaq}, it follows that if $\epsilon < 1 / q$, then
$$
\EE\left[\left.\left|\frac{\SQnk}{n}\right|^q \right| D_k(n)\right]
\leq \frac{2^{q \epsilon} c^q}{1 - q \epsilon}[\RQkm - \LQkm]^{q - q \epsilon}\left(
\frac{\nukm}{n (R_{k - 1}(n) - L_{k - 1}(n)) }\right)^q.
$$
Until this point we have worked only over the event $\{\nu_{k - 1}(n) > 0\}$, but now we enlarge our scope to the event $\{L_{k - 1}(n) < R_{k - 1}(n)\}$ and note that the preceding inequality holds there, as well.

Next notice that, conditionally given the triple
\beqnn
\Dt_k(n) := (L_{k - 1}(n), R_{k - 1}(n), \tau_k(n)),
\eeqnn
the  values $U_i$ with $\tau_k(n) < i \leq n$ are i.i.d.\ unif$(0, 1)$,  and so the number of them falling in the interval $(L_{k - 1}(n), R_{k - 1}(n))$ is distributed binomial$(m, t)$ with $m = n -\tau_k(n)$ and $t = R_{k - 1}(n) - L_{k - 1}(n)$, and hence (representing a binomial as a sum of independent Bernoulli random variables and applying the triangle inequality for $L^q$) has moment of order~$q$ bounded by $m^q t$. Thus
$$
\EE \left[\left.  \left(\frac{\nukm}{n(\RQkm - \LQkm)}\right)^q \right| \Dt_k(n) \right]  \leq \left[ R_{k-1}(n)-L_{k-1}(n) \right]^{1 - q},
$$
so that
$$
\EE\left[\left.\left|\frac{\SQnk}{n}\right|^q \right| \Dt_k(n)\right]
\leq \frac{2^{q \epsilon} c^q}{1 - q \epsilon}[\RQkm - \LQkm]^{1 - q \epsilon}.
$$
Since this inequality holds even when $L_{k - 1}(n) = R_{k - 1}(n)$, we can take expectations to conclude
\begin{eqnarray}
\EE \left| \frac{\SQnk}{n} \right|^q
&\leq& \frac{2^{q \epsilon} c^q}{1 - q \epsilon} \EE [\RQkm - \LQkm]^{1 - q \epsilon} \nonumber \\
&\leq& \frac{2^{q \epsilon} c^q}{1 - q \epsilon} \left( \frac{2 - 2^{ - (1 - q \epsilon)}}{2 - q \epsilon} \right)^{k - 1}, \label{ESQboundq}
\end{eqnarray}
where at the second inequality we have employed \refL{boundForEdiffpn}.

From~\eqref{boundForEIkq} and~\eqref{ESQboundq} we see that we can choose
$(a_k)$ to be the geometric distribution $a_k = (1 - \theta) \theta^{k - 1}$, $k \geq 1$, with
$$
\frac{2 - 2^{-q (1 - \epsilon)}}{q (1 - \epsilon) + 1} < \theta < 1.
$$
We then conclude
that $\sum_{k\geq 1} a_k^{1 - q}\,\EE \left| \left(\SQnk / n\right) - I_k\right|^q$ is bounded in $n$, and therefore that
$\SQ / n$ converges to $S$ in $L^p$, if the cost function is $\epsilon$-tame with
$\epsilon < 1 / p$.
\end{proof}
\smallskip

{\bf Acknowledgments.\ }The second author carried out his research as a Ph.D.\ student in the Department of Applied Mathematics and Statistics at The Johns Hopkins University.  We are grateful to Brigitte Vall\'{e}e for suggesting 
Lemmas~\ref{L:intbound}--\ref{L:Imomentbound} and for a multitude of other helpful comments, and to an anonymous referee for several helpful suggestions.

\newpage

\def\thesection{A}
\section{Appendix:\ A tractable expression for the measure~$\nu$}
\label{appendix}

The purpose of this appendix is to prove the following proposition used in the computation of $\EE\,S$ in \refS{SS:QValIntegralExpression}.

\begp
\label{P:nuformula}
With $(L_k, R_k)$ defined at \eqref{lowerBound}--\eqref{upperBound} as the interval of values eligible to be compared with the $k$th pivot chosen by {\tt QuickVal}, and with
$$
\nu(dx, dy) := \sum_{k \geq 0}\PP(L_k \in dx,R_k \in dy)
$$
as defined at~\eqref{nu}, we have
$$
\nu(dx, dy) = \delta_0(dx)\,\delta_1(dy)+ (1-x)^{-1}\,dx\,\delta_1(dy)
+ \delta_0(dx)\,y^{-1}\,dy + 2(y-x)^{-2}dx\,dy.
$$
\enp

\begin{proof}
To begin, since $L_0 := 0$ and $R_0 := 1$ we have
\begin{equation}
\PP(L_0 \in dx,R_0 \in dy)=\delta_0(dx)\,\delta_1(dy),\label{initialP}
\end{equation}
where $\delta_z$ denotes the probability measure concentrated at~$z$.
Now assume $k \geq 1$.  If $0 \leq \lambda < \alpha < \rho \leq 1$,
then
\begin{eqnarray*}
& &\PP(L_k \in dx,R_k \in dy\,|\,L_{k-1}=\lambda,R_{k-1}=\rho)\\
& &\hspace{1cm}=\delta_{\rho}(dy){\bf 1}(\lambda < x < \alpha)(\rho - \lambda)^{-1}\,dx + \delta_{\lambda}(dx){\bf 1}(\alpha < y < \rho)(\rho - \lambda)^{-1}\,dy.
\end{eqnarray*}
Hence
\begin{eqnarray}\label{recursiveP}
& &\PP(L_k \in dx,R_k \in dy)=\int\![\delta_{\rho}(dy){\bf 1}(\lambda < x < \alpha)(\rho - \lambda)^{-1} dx\\
& &\hspace{2cm}+ \delta_{\lambda}(dx){\bf 1}(\alpha < y < \rho)(\rho - \lambda)^{-1} dy]\,\PP(L_{k-1} \in d\lambda,R_{k-1} \in d\rho).\nonumber
\end{eqnarray}
We can infer [and inductively prove using~\eqref{recursiveP}] that, for $k \geq 1$,
\begin{eqnarray}
\hspace{1cm}\PP(L_k \in dx,R_k \in dy) = \delta_1(dy)f_k(x)dx + \delta_0(dx)g_k(y)dy + h_k(x,y)dx\,dy,\label{expressionForPwithFGH}
\end{eqnarray}
where
$$
f_1(x) = {\bf 1}(0 \leq x < \alpha), \quad g_1(y) = {\bf 1}(\alpha < y \leq 1), \quad h_1(x, y) = 0,
$$
and, for $k \geq 2$,
\begin{eqnarray}
\hspace{1cm} f_k(x)&=&{\bf 1}(0 \leq x < \alpha)\int\!{\bf 1}(0 \leq \lambda < x) (1-\lambda)^{-1}f_{k-1}(\lambda)\,d\lambda,\label{recursiveF}\\
g_k(y)&=&{\bf 1}(\alpha < y \leq 1)\int\!{\bf 1}(y < \rho \leq 1) \rho^{-1}g_{k-1}(\rho)\,d\rho,\label{recursiveG}\\
h_k(x,y)&=&{\bf 1}(0 \leq x < \alpha < y \leq 1) \Big[ (1-x)^{-1}f_{k-1}(x) + y^{-1}g_{k-1}(y)\label{recursiveH}\\
& &\hspace{.5cm}{} + \int\!{\bf 1}(0 \leq \lambda < x)(y-\lambda)^{-1}h_{k-1}(\lambda,y)\,d\lambda\nonumber\\
& &\hspace{.5cm}{} + \int\!{\bf 1}(y < \rho \leq 1)(\rho-x)^{-1}h_{k-1}(x,\rho)\,d\rho \Big].\nonumber
\end{eqnarray}
Henceforth suppose $0 \leq x < \alpha < y \leq 1$.  From~\eqref{recursiveG} we obtain
\begin{equation}
\label{gexplicit}
g_k(y) = \frac{(-\ln y)^{k-1}}{(k-1)!}, \quad k \geq 1,
\end{equation}
whence
\begin{equation}
\sum_{k \geq 1} g_k(y) =y^{-1}.\label{sumOfG}
\end{equation}
By recognizing symmetry between \eqref{recursiveF} and \eqref{recursiveG}, we also find
\begin{equation}
\label{fexplicit}
f_k(x) = \frac{[-\ln(1-x)]^{k-1}}{(k-1)!}, \quad k \geq 1,
\end{equation}
and so
\begin{equation}
\sum_{k \geq 1} f_k(x) = (1-x)^{-1}.\label{sumOfF}
\end{equation}
In order to compute $\sum_{k \geq 1}h_k(x,y)$, we consider the generating function
\begin{eqnarray}\label{Hxyz}
H(x,y,z):=\sum_{k \geq 1}h_k(x,y)\,z^k.
\end{eqnarray}
From~\eqref{recursiveH},
\begin{eqnarray}
H(x,y,z) &=& z\left[(1-x)^{-1}\sum_{k \geq 1}f_k(x)\,z^{k} + y^{-1}\sum_{k \geq 1}g_k(y)\,z^k\right.\nonumber\\
& &\left. \qquad {} + \int_0^x\!(y-\lambda)^{-1}H(\lambda,y,z)\,d\lambda + \int_y^1\!(\rho - x)^{-1}H(x,\rho,z)\,d\rho\right].\label{integralEqn}
\end{eqnarray}
Using this integral equation, we will show via a series of lemmas culminating in \refL{L:last} that
\begin{eqnarray}
H(x, y) := H(x, y, 1) =\sum_{k \geq 1}h_k(x,y)\mathrm{\ equals\ }2(y-x)^{-2}.\label{Hxy}
\end{eqnarray}
Combining
equations
\eqref{expressionForPwithFGH}, \eqref{sumOfG}, \eqref{sumOfF}, and~\eqref{Hxy}, we obtain the desired expression for~$\nu$.
\end{proof}

Throughout the remainder of this appendix, whenever we refer to $H(x, y)$ we tacitly suppose that $0 \leq x < \alpha < y \leq 1$.

\begin{lemma}\label{lemma1}
$H(x,y) < \infty$ almost everywhere.
\end{lemma}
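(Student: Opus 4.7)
The plan is to show that $H(x,y)$ is integrable against a suitable strictly positive weight on the region $R := \{(x,y) : 0 \leq x < \alpha < y \leq 1\}$, which will force pointwise finiteness almost everywhere. The natural choice is the weight $(y-x)^2$, since on one hand it is strictly positive throughout $R$, and on the other hand the integral of $h_k(x,y)(y-x)^2$ against Lebesgue measure can be related to $\EE(R_k-L_k)^2$, for which \refL{boundForEdiffp} already provides a geometric bound.

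First I would observe that, by the decomposition~\eqref{expressionForPwithFGH}, $h_k(x,y)\,dx\,dy$ is precisely the density on $R$ of the joint law of $(L_k, R_k)$ restricted to $R$. Consequently
\begin{eqnarray*}
\int_R\! h_k(x,y)(y-x)^2\,dx\,dy
 = \EE\bigl[(R_k-L_k)^2\,{\bf 1}(0 < L_k,\,R_k < 1)\bigr]
 \leq \EE(R_k-L_k)^2.
\end{eqnarray*}
Applying \refL{boundForEdiffp} with $p = 2$ gives $\EE(R_k-L_k)^2 \leq (7/12)^k$, which is summable in $k$.

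Since each $h_k$ is nonnegative, Tonelli's theorem permits exchanging the sum and the integral, yielding
\begin{eqnarray*}
\int_R\! (y-x)^2\,H(x,y)\,dx\,dy
 = \sum_{k \geq 1} \int_R\! h_k(x,y)(y-x)^2\,dx\,dy
 \leq \sum_{k \geq 1}(7/12)^k < \infty.
\end{eqnarray*}
Thus the nonnegative function $(y-x)^2 H(x,y)$ lies in $L^1(R)$, hence is finite at almost every point of $R$; since $(y-x)^2 > 0$ throughout $R$, the same conclusion transfers to $H$ itself. There is no real obstacle here: the only thing to be careful about is to not conflate the density $h_k$ on the interior region with the full joint law of $(L_k, R_k)$, whose boundary parts on $\{x = 0\}$ and $\{y = 1\}$ are absorbed into $f_k$ and $g_k$ and therefore do not affect the integral above.
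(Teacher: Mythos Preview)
Your proof is correct, and it takes a genuinely different (and arguably cleaner) route from the paper's own argument.

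The paper proves the lemma by going through the key-comparison cost: with $\beta \equiv 1$ one has $\EE\,S < \infty$, and then the intermediate formula~\eqref{intermediateIntegralES} for $\EE\,S$ in terms of the measure~$\nu$, together with the decomposition~\eqref{expressionForPwithFGH}, yields an expression in which $H(x,y)$ appears integrated against (effectively) the weight $(y-x)$. Finiteness of $\EE\,S$ then forces $H<\infty$ almost everywhere. Your argument instead reads off directly from~\eqref{expressionForPwithFGH} that $h_k$ is the Lebesgue density of $(L_k,R_k)$ on the interior of~$R$, integrates against the weight $(y-x)^2$, and invokes \refL{boundForEdiffp} with $p=2$ to get a geometrically summable bound.

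What your approach buys is self-containment: it appeals only to \refL{boundForEdiffp} and the decomposition~\eqref{expressionForPwithFGH}, with no need to route through $\EE\,S$ or any of the machinery from \refS{SS:QValIntegralExpression}. The paper's approach has the virtue of tying the lemma back to a probabilistically meaningful quantity, but your weight $(y-x)^2$ is the more natural choice once one recognizes $h_k$ as a density, since it produces exactly $\EE(R_k-L_k)^2$ restricted to the interior.
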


\begin{proof}
We revisit Remarks~\ref{rmk:keys} and~\ref{rmk:finiteESforKeys}
and consider the number of key comparisons required by {\tt QuickVal}$(n, \alpha)$.  As shown at \eqref{ESforKeyComparisons}, we have $\EE\, S < \infty$ in this case. On the other hand, with $\beta \equiv 1$,
from
\eqref{intermediateIntegralES}--\eqref{nu}, \eqref{initialP}, \eqref{expressionForPwithFGH},
and \eqref{sumOfG}--\eqref{sumOfF}, we have
\begin{eqnarray*}
\EE\,S &=& 2 \int_{0 < w < u < 1} \left[1 + \int_{0 \leq x < \alpha} (1 - x)^{-1}\,{\bf 1}(x < w)\,dx + \int_{\alpha < y \leq 1} y^{-1}\,{\bf 1}(y > u)\,dy\right. \\
& & \quad \left.{} + \int_{0 \leq x < \alpha < y \leq 1} (y - x)^{-1}\,{\bf 1}(x < w < u < y)\,H(x, y)\,dx\,dy \right] dw\,du.
\end{eqnarray*}
Thus $H(x,y) < \infty$ almost everywhere.
\end{proof}

The next lemma establishes monotonicity properties of $H(x, y)$.
\begin{lemma}\label{lemma2}
$H(x,y)$ is increasing in~$x$ and decreasing in~$y$.
\end{lemma}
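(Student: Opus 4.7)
The plan is to prove the monotonicity of each summand $h_k(x,y)$ individually by induction on $k$, working directly from the recursion~\eqref{recursiveH}, and then to inherit both monotonicity properties for the sum $H(x,y) = \sum_{k \geq 1} h_k(x,y)$ from the fact that pointwise suprema and sums preserve monotonicity.

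Throughout, assume $0 \leq x < \alpha < y \leq 1$, where $h_k$ is possibly nonzero.  The base case is $h_1 \equiv 0$, for which the claim holds trivially.  For the inductive step, suppose that $h_{k-1}(\cdot,\cdot)$ is increasing in its first argument and decreasing in its second argument on the open region in question, and examine each of the four summands in \eqref{recursiveH}. For monotonicity in $x$ with $y$ fixed: (i)~the term $(1-x)^{-1} f_{k-1}(x) = (1-x)^{-1}\,[-\ln(1-x)]^{k-2}/(k-2)!$, via the explicit formula~\eqref{fexplicit}, is a product of two nonnegative increasing functions of $x \in [0, \alpha)$, hence increasing; (ii)~the term $y^{-1} g_{k-1}(y)$ does not depend on $x$; (iii)~$\int_0^x (y-\lambda)^{-1} h_{k-1}(\lambda, y)\,d\lambda$ is increasing in $x$ because the integrand is nonnegative; and (iv)~in $\int_y^1 (\rho-x)^{-1} h_{k-1}(x,\rho)\,d\rho$, the factor $(\rho-x)^{-1}$ is increasing in $x$ (since $\rho>x$), while $h_{k-1}(x,\rho)$ is increasing in $x$ by the inductive hypothesis, so the integrand and the integral are both increasing.

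A completely symmetric analysis handles monotonicity (decreasing) in $y$ with $x$ held fixed: using \eqref{gexplicit} one checks directly that $y^{-1} g_{k-1}(y) = y^{-1}(-\ln y)^{k-2}/(k-2)!$ is decreasing in $y \in (\alpha, 1]$; the term $(1-x)^{-1} f_{k-1}(x)$ has no $y$-dependence; in the $\lambda$-integral, both $(y-\lambda)^{-1}$ and $h_{k-1}(\lambda,y)$ are decreasing in $y$; and in the $\rho$-integral, an increase in $y$ shrinks the domain of integration of a nonnegative integrand. This completes the induction, so each $h_k$ has the stated monotonicity properties, and summing over $k \geq 1$ yields the same properties for $H(x,y)$, interpreted in $[0, \infty]$.

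There is no real obstacle here: the argument is a straightforward induction whose only subtlety is that we must rely on the explicit formulas~\eqref{fexplicit} and~\eqref{gexplicit} to verify the monotonicity of the ``boundary'' terms $(1-x)^{-1}f_{k-1}(x)$ and $y^{-1}g_{k-1}(y)$, since these are not themselves $h$-terms covered by the inductive hypothesis.  The fact that $H(x,y)$ is finite almost everywhere, established in Lemma~\ref{lemma1}, combined with the monotonicity just derived, will then make $H$ finite and monotone on the interior of the region, which is all that is needed for the subsequent lemmas leading to~\eqref{Hxy}.
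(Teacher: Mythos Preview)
Your proof is correct and follows essentially the same approach as the paper's: induction on~$k$ using the recursion~\eqref{recursiveH} together with the explicit formulas~\eqref{fexplicit} and~\eqref{gexplicit} for the boundary terms, then summing over~$k$. The paper's own argument is simply a more compressed version of exactly what you wrote.
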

\begin{proof}
\noindent For each $k \geq 1$, we see from~\eqref{fexplicit} that $f_k(x)$ is increasing in~$x$ and
from~\eqref{gexplicit} that $g_k(y)$ is decreasing in~$y$.  Since $h_1 \equiv 0$, it follows by induction on~$k$ from~\eqref{recursiveH} that $h_k(x,y)$ is increasing in~$x$ and decreasing in~$y$ for each~$k$.  Thus $H(x,y)=\sum_{k \geq 1} h_k(x, y)$ enjoys the same monotonicity properties.
\end{proof}

\begin{lemma}\label{lemma2.5}
$H(x, y) < \infty$ for all~$x$ and~$y$.
\end{lemma}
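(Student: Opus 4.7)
The plan is to deduce finiteness everywhere from finiteness almost everywhere by exploiting the monotonicity properties already established in Lemma~\ref{lemma2}. The idea is a standard "monotonicity on a co-null set forces pointwise finiteness" argument.

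Fix an arbitrary point $(x_0,y_0)$ with $0 \leq x_0 < \alpha < y_0 \leq 1$. I would first observe that the open rectangle
\[
R := \{(x,y) : x_0 < x < \alpha,\ \alpha < y < y_0\}
\]
has positive two-dimensional Lebesgue measure. By \refL{lemma1}, the set of points where $H = \infty$ has Lebesgue measure zero; hence $R$ must contain at least one point $(x_1,y_1)$ at which $H(x_1,y_1) < \infty$.

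Now I invoke \refL{lemma2}: since $x_0 \leq x_1$ and $H(\cdot,y_0)$ is increasing, we have $H(x_0,y_0) \leq H(x_1,y_0)$; and since $y_1 \leq y_0$ and $H(x_1,\cdot)$ is decreasing, we have $H(x_1,y_0) \leq H(x_1,y_1)$. Chaining these,
\[
H(x_0,y_0) \leq H(x_1,y_1) < \infty,
\]
which is the desired conclusion since $(x_0,y_0)$ was arbitrary.

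There is no serious obstacle: the proof is a one-line consequence of the two preceding lemmas once one notes that the (open, nondegenerate) rectangle $R$ strictly in the interior of the admissible region northwest of $(x_0,y_0)$ in the coordinate sense appropriate to the monotonicity (increasing in $x$, decreasing in $y$) has positive measure and hence meets the co-null set where $H$ is finite. The only thing to be careful about is getting the direction of the monotonicity right so that the chosen witness point $(x_1,y_1)$ actually dominates $H(x_0,y_0)$, which is why we pick $x_1 > x_0$ and $y_1 < y_0$.
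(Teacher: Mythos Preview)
Your proof is correct and is precisely the argument the paper intends: the paper's own proof simply reads ``This is immediate from Lemmas~\ref{lemma1}--\ref{lemma2},'' and what you have written is the natural unpacking of that sentence. The only comment is cosmetic---the monotonicity chain and the positive-measure rectangle argument are exactly what ``immediate'' is hiding.
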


\begin{proof}
This is immediate from Lemmas~\ref{lemma1}--\ref{lemma2}.
\end{proof}

\begin{lemma}\label{lemma3}
The generating function $H(x, y, z)$ at~\eqref{Hxyz}, is {\rm (}with $h_0 :\equiv 0${\rm )}\ the unique power-series solution $\Ht(x, y, z) = \sum_{k \geq 0} \hti_k(x, y) z^k$ {\rm (}in $0 \leq z \leq 1${\rm )}\  to the integral
equation~\eqref{integralEqn} such that $0 \leq \hti_k(x, y) \leq h_k(x, y)$ for all $k, x, y$.
\end{lemma}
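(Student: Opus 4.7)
The strategy is to prove uniqueness by matching coefficients of $z^m$ on both sides of~\eqref{integralEqn}, showing that the coefficients of any qualifying solution must satisfy the same recursion as $h_m$ and hence coincide with $h_m$ by induction on~$m$. Existence is immediate: $H$ itself satisfies~\eqref{integralEqn} by construction from~\eqref{recursiveH}, and trivially $0 \leq h_k \leq h_k$. So only uniqueness needs to be proven.

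Suppose $\Ht(x, y, z) = \sum_{k \geq 0} \hti_k(x, y)\,z^k$ is any qualifying solution, and fix $(x, y)$ with $0 \leq x < \alpha < y \leq 1$. First I would substitute the series into both sides of~\eqref{integralEqn}. On the right-hand side, Tonelli's theorem---justified by non-negativity of the kernels $(y-\lambda)^{-1}$ and $(\rho - x)^{-1}$ and of the coefficients $\hti_k$---permits interchanging summation and integration in each of the two integral terms, recasting the right-hand side as a single power series in~$z$. By uniqueness of power-series coefficients, matching the coefficient of $z^m$ on each side yields $\hti_0 = 0$ (from the overall factor of $z$); $\hti_1 = 0 = h_1$ (because the $z^1$ coefficient on the right is built purely from $\hti_0$, with no $f_0$ or $g_0$ terms); and, for $m \geq 2$,
\begin{eqnarray*}
\hti_m(x,y) &=& (1-x)^{-1} f_{m-1}(x) + y^{-1} g_{m-1}(y) \\
& & {} + \int_0^x (y-\lambda)^{-1} \hti_{m-1}(\lambda, y)\,d\lambda + \int_y^1 (\rho - x)^{-1} \hti_{m-1}(x, \rho)\,d\rho,
\end{eqnarray*}
which is \emph{identical} in form to~\eqref{recursiveH} for $h_m$ on the restricted region.

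A straightforward induction on~$m$ then yields $\hti_m = h_m$ on $\{(x, y) : 0 \leq x < \alpha < y \leq 1\}$; outside this region, $h_m \equiv 0$, and the sandwich bound $\hti_m \leq h_m$ forces $\hti_m \equiv 0 = h_m$ there as well. Hence $\Ht = H$.

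The argument is essentially routine once the coefficient-matching framework is set up, and I expect no serious obstacle. The only mild subtleties are (i)~justifying the sum-integral interchange, handled cleanly by Tonelli via non-negativity, and (ii)~the bookkeeping to verify that the recursion extracted from coefficient matching agrees precisely with~\eqref{recursiveH}. The upper bound $\hti_k \leq h_k$ plays a dual role: it ensures convergence of $\Ht(x, y, 1)$ (via \refL{lemma2.5}), so that~\eqref{integralEqn} is meaningful at $z = 1$, and it forces $\hti_m$ to vanish off the restricted region without additional argument.
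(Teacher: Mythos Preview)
Your proof is correct and follows essentially the same approach as the paper's: both establish existence by noting that $H$ itself qualifies, and both prove uniqueness by equating coefficients of $z^k$ in~\eqref{integralEqn} to recover $\hti_0 = \hti_1 \equiv 0$ and the recursion~\eqref{recursiveH} for $k \geq 2$, then invoking induction. You supply a bit more detail than the paper (Tonelli for the sum--integral interchange, explicit handling of the region outside $0 \leq x < \alpha < y \leq 1$ via the sandwich bound), but the core argument is the same.
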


\begin{proof}
We have already seen that~$H$ is such a solution.  Conversely, if~$\Ht$ is such a solution, then equating coefficients of $z^k$ in the integral equation [which is valid because we know by Lemma~\ref{lemma2.5}
that $H(x, y, z)$, and hence also
$\Ht(x, y, z)$, is finite for $0 \leq z \leq 1$] we find that the functions $\hti_k(x, y)$ satisfy $\hti_k \equiv 0$ for $k = 0, 1$ and the recurrence relation~\eqref{recursiveH} for $k \geq 2$.  It then follows by induction that $\hti_k(x, y) = h_k(x, y)$ for all $k, x, y$.
\end{proof}

Next we let $H_0(x,y,z) :\equiv 0$ and, for $0 \leq z \leq 1$,  inductively define $H_n(x,y,z)$ by applying successive substitutions to the integral equation~\eqref{integralEqn}; that is, for each $n \geq 1$ we define
\beqn
H_n(x,y,z)&:=& z\left[(1-x)^{-1}\sum_{k \geq 1}f_k(x)\,z^{k} + y^{-1}\sum_{k \geq 1}g_k(y)\,z^k\right.\nonumber\\
& &\left. {} + \int_0^x\!(y-\lambda)^{-1}H_{n-1}(\lambda,y,z)\,d\lambda + \int_y^1\!(\rho - x)^{-1}H_{n-1}(x,\rho,z)\,d\rho\right].\label{Hnxy}
\eeqn
Let
$[z^k]\, H_n(x,y,z)$ denote the coefficient of $z^k$ in $H_n(x, y, z)$.

\begin{lemma}\label{lemma4}
For each $k\geq 1$, $[z^k]\,H_n(x, y, z)$ is nondecreasing in $n \geq 0$.
\end{lemma}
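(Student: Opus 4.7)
My plan is to proceed by induction on $n$ after first extracting, from the integral recursion~\eqref{Hnxy}, a clean coefficient-by-coefficient recursion for $h^{(n)}_k(x,y) := [z^k] H_n(x,y,z)$. Because \eqref{Hnxy} has an outer factor of $z$ and each of the base power series $\sum_{k \geq 1} f_k(x) z^k$ and $\sum_{k \geq 1} g_k(y) z^k$ starts at $z^1$, the series $H_n(x,y,z)$ has no $z^0$ or $z^1$ terms; thus $h^{(n)}_0 = h^{(n)}_1 \equiv 0$ for every $n$, and the assertion is trivial for $k = 0, 1$. For $k \geq 2$, equating coefficients of $z^k$ on both sides of~\eqref{Hnxy} yields
\begin{eqnarray*}
h^{(n)}_k(x,y) &=& (1-x)^{-1} f_{k-1}(x) + y^{-1} g_{k-1}(y) \\
& & {} + \int_0^x (y - \lambda)^{-1}\,h^{(n-1)}_{k-1}(\lambda, y)\,d\lambda + \int_y^1 (\rho - x)^{-1}\,h^{(n-1)}_{k-1}(x, \rho)\,d\rho,
\end{eqnarray*}
for every $n \geq 1$, where we use the convention $H_0 \equiv 0$ at $n = 1$. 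The key structural observation is that the first two terms on the right are independent of~$n$, while the integral terms feed back only through lower coefficients $h^{(n-1)}_{k-1}$ against the manifestly nonnegative kernels $(y - \lambda)^{-1}$ on $0 \leq \lambda < x$ and $(\rho - x)^{-1}$ on $y < \rho \leq 1$.

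The base case $n = 0$ of the induction is immediate: $h^{(0)}_k \equiv 0$, while $h^{(1)}_k(x,y) = (1-x)^{-1} f_{k-1}(x) + y^{-1} g_{k-1}(y) \geq 0$ since $f_{k-1}$ and $g_{k-1}$ are nonnegative by the explicit formulas~\eqref{fexplicit}--\eqref{gexplicit}. For the inductive step, assume $h^{(n)}_j \geq h^{(n-1)}_j$ pointwise (on $0 \leq x < \alpha < y \leq 1$) for every $j \geq 1$. Subtracting the recursion displayed above at levels $n+1$ and $n$, the $n$-independent base terms cancel, leaving
\begin{eqnarray*}
h^{(n+1)}_k(x,y) - h^{(n)}_k(x,y) &=& \int_0^x (y - \lambda)^{-1}\,[h^{(n)}_{k-1}(\lambda, y) - h^{(n-1)}_{k-1}(\lambda, y)]\,d\lambda \\
& & {} + \int_y^1 (\rho - x)^{-1}\,[h^{(n)}_{k-1}(x, \rho) - h^{(n-1)}_{k-1}(x, \rho)]\,d\rho,
\end{eqnarray*}
which is $\geq 0$ by the inductive hypothesis applied at index $j = k - 1$ together with nonnegativity of the kernels. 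This closes the induction and proves the lemma.

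There is no real obstacle in this argument: the whole proof reduces to the twin observations that the iteration~\eqref{Hnxy} is started from $H_0 \equiv 0$ (so the first iterate already has nonnegative coefficients) and that the operator being iterated is monotone in the sense that it acts linearly on the tail through nonnegative kernels. The only bookkeeping care required is the index shift $k \mapsto k - 1$ caused by the outer factor of $z$ in~\eqref{Hnxy}, which is what dictates that we compare coefficients of $z^k$ in $H_n$ with coefficients of $z^{k-1}$ in $H_{n-1}$ (and which is also why the case $k = 1$ is vacuous).
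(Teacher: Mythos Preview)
Your proof is correct and is precisely the induction on~$n$ that the paper's one-line proof alludes to; you have simply written out the coefficient recursion and the subtraction step that make the monotonicity evident. There is no substantive difference in approach.
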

\begin{proof}
The inequality $[z^k]\,H_n(x, y, z) \geq [z^k]\,H_{n - 1}(x, y, z)$
is proved easily by induction on $n \geq 1$.
\end{proof}

According to the next lemma, $H$ dominates each $H_n$.
\begin{lemma}\label{lemma5}
For all $n \geq 0$ and $k \geq 1$ we have
\beqn
0 \leq [z^k]\,H_n(x,y,z) \leq h_k(x, y, z).\label{inequalitiesInLemma5}
\eeqn
\end{lemma}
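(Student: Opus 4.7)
The plan is to proceed by induction on $n \geq 0$, treating the two halves of~\eqref{inequalitiesInLemma5} (nonnegativity and the upper bound by $h_k(x, y)$) simultaneously.

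For the base case $n = 0$, we have $H_0 \equiv 0$, so $[z^k]\,H_0(x, y, z) = 0$ for every~$k$, giving both inequalities immediately since $h_k(x, y) \geq 0$ by the nonnegativity of $f_{k-1}$, $g_{k-1}$ and iteration of~\eqref{recursiveH} starting from $h_1 \equiv 0$.

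For the inductive step, suppose the inequalities hold at level $n - 1$ for all $k \geq 1$. Reading off the coefficient of~$z^k$ from the definition~\eqref{Hnxy} of $H_n$, I would obtain, for each $k \geq 2$ and for $0 \leq x < \alpha < y \leq 1$,
\begin{eqnarray*}
[z^k]\,H_n(x, y, z) &=& (1-x)^{-1} f_{k-1}(x) + y^{-1} g_{k-1}(y) \\
&& {} + \int_0^x (y - \lambda)^{-1}\,[z^{k-1}]\,H_{n-1}(\lambda, y, z)\,d\lambda \\
&& {} + \int_y^1 (\rho - x)^{-1}\,[z^{k-1}]\,H_{n-1}(x, \rho, z)\,d\rho,
\end{eqnarray*}
with the obvious modification at $k = 1$ (only the first two terms survive, and in fact these vanish by the definitions of $f_1$ and $g_1$), and both sides zero if $(x, y)$ falls outside the strip $0 \leq x < \alpha < y \leq 1$. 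Comparing this identity to the recurrence~\eqref{recursiveH} satisfied by $h_k(x, y) = [z^k]\,H(x, y, z)$, nonnegativity follows immediately from the inductive hypothesis and the pointwise nonnegativity of $f_{k-1}$ and $g_{k-1}$, and the upper bound follows by applying the inductive hypothesis $[z^{k-1}]\,H_{n-1} \leq h_{k-1}$ termwise inside the two integrals.

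There is essentially no obstacle: the only subtlety is ensuring that the power-series manipulation (extracting coefficients of $z^k$ from an integral equation) is legitimate, which is fine here because $H_n(x, y, z)$ is by construction a polynomial in~$z$ of finite degree at each step of the recursion~\eqref{Hnxy}, so interchanging $\sum_k$ with the integrals $\int_0^x$ and $\int_y^1$ involves only finitely many terms. Once Lemmas~\ref{lemma1}--\ref{lemma4} and this lemma are in hand, the monotone convergence $[z^k]\,H_n \uparrow h_k$ and the identification of the pointwise limit via~\eqref{integralEqn} will feed into \refL{L:last} to yield the explicit formula $H(x, y) = 2 (y - x)^{-2}$.
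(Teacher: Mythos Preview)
Your approach is essentially the same as the paper's: induction on~$n$, with the paper citing \refL{lemma4} for the lower bound while you establish nonnegativity directly within the same induction. Both routes amount to the same one-line check.

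One correction to your justification paragraph: $H_n(x,y,z)$ is \emph{not} a polynomial of finite degree in~$z$. Already $H_1(x,y,z) = z\bigl[(1-x)^{-1}\sum_{k\geq 1}f_k(x)z^k + y^{-1}\sum_{k\geq 1}g_k(y)z^k\bigr]$ carries the full infinite series coming from the $f_k$'s and $g_k$'s. The interchange of coefficient extraction with $\int_0^x$ and $\int_y^1$ is nonetheless valid, but for a different reason: all terms involved are nonnegative (by your own inductive hypothesis), so Tonelli's theorem applies. Also, your description of the $k=1$ case is slightly garbled---at $k=1$ none of the terms contribute, since the $f$- and $g$-sums begin at $z^2$ and $[z^0]H_{n-1}=0$---but the conclusion $[z^1]H_n=0=h_1$ is correct.
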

\begin{proof}
Lemma~\ref{lemma4}
establishes the first inequality, and the second is proved easily by induction on $n$.
\end{proof}

Lemmas~\ref{lemma3}--\ref{lemma5} lead to the following lemma:
\begin{lemma}\label{lemma6}
For $0 \leq x < \alpha < y \leq 1$ and $0 \leq z \leq 1$ we have
\beqnn
H_n(x, y, z) \uparrow H(x, y, z)\ \mbox{as }n \uparrow \infty.
\eeqnn
\end{lemma}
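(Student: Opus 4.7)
The plan is to combine the monotonicity from Lemma~\ref{lemma4}, the pointwise domination from Lemma~\ref{lemma5}, and the uniqueness from Lemma~\ref{lemma3}. First I would observe that, for each fixed $k \geq 1$, the sequence $([z^k]\,H_n(x, y, z))_{n \geq 0}$ is nondecreasing in~$n$ (Lemma~\ref{lemma4}) and bounded above by $h_k(x, y)$ (Lemma~\ref{lemma5}); hence the limit
$$
\hti_k(x, y) := \lim_{n \to \infty} [z^k]\,H_n(x, y, z)
$$
exists and satisfies $0 \leq \hti_k(x, y) \leq h_k(x, y)$.

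Next, I would define $\Ht(x, y, z) := \sum_{k \geq 0} \hti_k(x, y)\,z^k$ (with $\hti_0 :\equiv 0$) and argue that $H_n(x, y, z) \uparrow \Ht(x, y, z)$ as $n \to \infty$, for every $0 \leq z \leq 1$. Since each term $[z^k]\,H_n(x, y, z)\,z^k$ is nonnegative and nondecreasing in~$n$, and since the series $\sum_k h_k(x, y)\,z^k = H(x, y, z)$ is finite by Lemma~\ref{lemma2.5}, this is a direct application of the monotone convergence theorem for sums (viewing the series as an integral against counting measure).

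I would then pass to the limit $n \to \infty$ in the recursion~\eqref{Hnxy} defining $H_n$. The two non-integral terms on the right-hand side do not involve~$n$, and for the two integral terms I would apply the monotone convergence theorem under the integral sign, using the nonnegativity and monotone-in-$n$ convergence $H_{n - 1}(\lambda, y, z) \uparrow \Ht(\lambda, y, z)$ (and similarly in the other variable). The outcome is that $\Ht(x, y, z)$ satisfies the integral equation~\eqref{integralEqn}.

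Finally, since $\Ht$ is a power series in~$z$ whose coefficients $\hti_k$ satisfy $0 \leq \hti_k(x, y) \leq h_k(x, y)$, Lemma~\ref{lemma3} forces $\Ht \equiv H$, which gives $H_n(x, y, z) \uparrow H(x, y, z)$ as required. The only conceptual step is the interchange of limit and integral in the recursion, and this is handled cleanly by monotone convergence; no serious obstacle arises once the dominating series $H(x, y, z) < \infty$ from Lemma~\ref{lemma2.5} is in hand.
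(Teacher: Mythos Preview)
Your proposal is correct and follows essentially the same approach as the paper's proof: define the coefficientwise limit $\hti_k$ using Lemmas~\ref{lemma4}--\ref{lemma5}, pass to the limit in the recursion~\eqref{Hnxy} by monotone convergence to see that $\Ht$ solves~\eqref{integralEqn}, invoke the uniqueness of Lemma~\ref{lemma3} to get $\Ht = H$, and apply monotone convergence for sums to identify $\Ht$ with $\lim_n H_n$. The only cosmetic difference is the order in which you present the two monotone-convergence steps.
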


\begin{proof}
Recalling Lemmas~\ref{lemma4}--\ref{lemma5}, define $\Ht(x, y, z)$ to be the power series in~$z$ with coefficient of $z^k$ equal to $\hti_k(x, y) := \lim_{n \uparrow \infty} [z^k]\,H_n(x, y, z)$, which satisfies $0 \leq \hti_k(x, y) \leq h_k(x, y)$.
On the other hand, $\Ht$ satisfies the integral equation~\eqref{integralEqn} by applying the monotone convergence theorem to~\eqref{Hnxy}.  Thus it follows from Lemma~\ref{lemma3} that $\Ht=H$.  Finally, another application of the monotone convergence theorem shows that~$\Ht(x, y, z) = \lim_{n \uparrow \infty} H_n(x, y, z)$.
\end{proof}

Our next lemma, when combined with the preceding one, immediately leads to inequality in one direction in~\eqref{Hxy}.

\begin{lemma}\label{lemma7}
For $0 \leq x < \alpha < y \leq 1$ and all $n \geq 0$,
\beqnn
H_{n}(x,y,1) \leq 2(y-x)^{-2}.
\eeqnn
\end{lemma}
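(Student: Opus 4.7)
The plan is to prove this by induction on~$n$, exploiting the fact that the function $\Phi(x,y) := 2(y-x)^{-2}$ is a fixed point of the recursion~\eqref{Hnxy} evaluated at $z = 1$.

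The first step is to rewrite the recursion at $z=1$ using~\eqref{sumOfF} and~\eqref{sumOfG}. The boundary term $(1-x)^{-1}\sum_{k \geq 1} f_k(x) + y^{-1}\sum_{k \geq 1} g_k(y)$ collapses to $(1-x)^{-2} + y^{-2}$, giving
\[
H_n(x,y,1) = (1-x)^{-2} + y^{-2} + \int_0^x (y-\lambda)^{-1} H_{n-1}(\lambda, y, 1)\,d\lambda + \int_y^1 (\rho - x)^{-1} H_{n-1}(x, \rho, 1)\,d\rho.
\]

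The second step is the key computation: I verify that substituting $\Phi$ for $H_{n-1}$ on the right-hand side returns $\Phi(x,y)$ exactly. Using the elementary antiderivative $\int (y-\lambda)^{-3}\,d\lambda = \tfrac{1}{2}(y-\lambda)^{-2}$ and its counterpart, one obtains $\int_0^x (y-\lambda)^{-1}\cdot 2(y-\lambda)^{-2}\,d\lambda = (y-x)^{-2} - y^{-2}$ and symmetrically $\int_y^1 (\rho - x)^{-1}\cdot 2(\rho-x)^{-2}\,d\rho = (y-x)^{-2} - (1-x)^{-2}$. Adding these to the boundary contribution $(1-x)^{-2} + y^{-2}$ produces a telescoping cancellation yielding exactly $2(y-x)^{-2} = \Phi(x,y)$.

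With that fixed-point identity in hand, the induction is immediate. The base case $H_0 \equiv 0 \leq \Phi$ is trivial, and for the inductive step, since the kernels $(y-\lambda)^{-1}$ and $(\rho - x)^{-1}$ are nonnegative on the domains of integration, the hypothesis $H_{n-1}(\cdot,\cdot,1) \leq \Phi$ can be inserted into both integrals and the fixed-point identity then yields $H_n(x,y,1) \leq \Phi(x,y)$. I do not foresee any real obstacle; the only item to keep an eye on is the elementary integration and the pleasant cancellation that makes $\Phi$ a genuine fixed point of the recursion rather than merely an upper barrier that grows with each iterate.
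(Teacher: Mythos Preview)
Your proof is correct and follows essentially the same approach as the paper: induction on~$n$, with the inductive step carried out by substituting the bound $2(y-\lambda)^{-2}$ and $2(\rho-x)^{-2}$ into the two integrals and observing the same telescoping cancellation $(1-x)^{-2} + y^{-2} + [(y-x)^{-2} - y^{-2}] + [(y-x)^{-2} - (1-x)^{-2}] = 2(y-x)^{-2}$. Your framing of $\Phi$ as a fixed point of the recursion is a nice conceptual gloss on the same computation the paper performs.
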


\begin{proof}
We will prove this lemma by induction on $n$, starting with
\beqnn
H_0(x,y) = 0 \leq 2(y-x)^{-2}.
\eeqnn
Suppose that the claim holds for $n - 1$.  Then from \eqref{Hnxy}, \eqref{sumOfG}, and~\eqref{sumOfF} we have
\beqnn
H_{n}(x,y,1) &\leq& (1-x)^{-2}+y^{-2}+(y-x)^{-2}-y^{-2}+ (y-x)^{-2}-(1-x)^{-2}\\
&=& 2(y-x)^{-2}. \hspace{3in}\qed
\eeqnn
\noqed
\end{proof}

\bigskip
Finally we are ready to prove~\eqref{Hxy}.

\begl
\label{L:last}
For $0 \leq x < \alpha < y \leq 1$,
\beqnn
H(x,y,1) = 2(y-x)^{-2}.
\eeqnn
\enl

\begin{proof}
Define
\beqnn
\widehat{H}(x,y):=2(y-x)^{-2}-H(x,y).
\eeqnn
Then to prove the desired equality it suffices to show that for any integer $r \geq 0$ we have
\beqn
0 \leq \widehat{H}(x,y) \leq \mbox{$(\frac{2}{3})^{r}$} \times 2(y-x)^{-3}.\label{inequalitiesForHhat}
\eeqn
As remarked earlier, the nonnegativity of $\Hh$ follows from Lemmas~\ref{lemma6}--\ref{lemma7}.  We prove the upper bound on~$\Hh$ in \eqref{inequalitiesForHhat} by induction on $r$.   The bound clearly holds for $r=0$.  Notice that by substituting $z = 1$ and $H(x,y)=2(y-x)^{-2}-\widehat{H}(x,y)$ into the integral equation~\eqref{integralEqn} we find
\beqnn
\widehat{H}(x,y)&=&2(y-x)^{-2}-H(x,y)\\
&=&2(y-x)^{-2} - \left\{(1-x)^{-2} + y^{-2}+\int_0^x\!(y-\lambda)^{-1}[2(y-\lambda)^{-2} - \widehat{H}(\lambda,y)]\,d\lambda\right.\\
& &\hspace{2.5cm}\left.{} +\int_y^1\!(\rho - x)^{-1}[2(\rho-x)^{-2} - \widehat{H}(x,\rho)]\,d\rho\right\}\\
&=&\int_0^x\!(y-\lambda)^{-1}\widehat{H}(\lambda,y)\,d\lambda + \int_y^1\!(\rho - x)^{-1}\widehat{H}(x,\rho)\,d\rho.\label{Hhat}
\eeqnn
Thus if we assume that the upper bound in~\eqref{inequalitiesForHhat} holds for $r - 1$, then
\beqnn
\widehat{H}(x,y) &\leq& \left( \frac{2}{3} \right)^{r - 1} \times 2\left[\int_0^x\!(y-\lambda)^{-4}d\lambda + \int_y^1\!(\rho - x)^{-4}\,d\rho\right]\\
&\leq& (\sfrac{2}{3})^r \times 2(y-x)^{-3}.
\eeqnn
Hence \eqref{inequalitiesForHhat} holds for any nonnegative integer $r$.
\end{proof}

\bibliographystyle{plain}
\bibliography{references}
\end{document}